\documentclass[11pt,oneside,leqno]{amsart}
\hfuzz=3pt

\usepackage[centertags]{amsmath}
\usepackage{amsfonts}
\usepackage{amsmath,amssymb}
\usepackage{latexsym}
\usepackage{color}

 \oddsidemargin 5pt
          \evensidemargin .5in
          \marginparwidth 1in
          \marginparsep 7pt
          \topmargin .0in
          \headheight 12pt
          \headsep .285in
           \footskip 30pt
          \textheight 22.5cm     
          \textwidth 14.5cm

\hfuzz=3.6pt



\vfuzz2pt 


\hfuzz=3.5pt
\newtheorem{theorem}{Theorem}[section]
\newtheorem{proposition}[theorem]{Proposition}
\newtheorem{corollary}[theorem]{Corollary}

\theoremstyle{definition}

\renewcommand{\theequation }{\thesection .\arabic{theequation}}

\begin{document}
\title[]{Ricci solitons in three-dimensional \\ paracontact geometry}
\author{Giovanni Calvaruso and Antonella Perrone}
\date{}

\address{Dipartimento di Matematica e Fisica \lq\lq E. De Giorgi\rq\rq \\
Universit\`a del Salento \\ Prov. Lecce-Arnesano \\
73100,  Lecce \\ Italy.}
\email{giovanni.calvaruso@unisalento.it; antonella\_perrone@unisalento.it}

\subjclass[2000]{53C15, 53C25, 53B05, 53D15.}
\keywords{Paracontact metric structures, normal structures, infinitesimal harmonic transformations, paracontact Ricci solitons.}

\date{}

    \renewcommand{\theequation}{\thesection .\arabic{equation}}

\begin{abstract}
We completely describe paracontact metric three-manifolds whose Reeb vector field satisfies the Ricci soliton equation. While contact Riemannian (or Lorentz\-ian) Ricci solitons are necessarily trivial, that is, $K$-contact and Einstein, the paracontact metric case allows nontrivial examples. Both homogeneous and inhomogeneous nontrivial three-dimensional examples are explicitly described. Finally, we correct the main result of \cite{BeCr}, concerning three-dimensional normal paracontact Ricci solitons.
\end{abstract}

\maketitle

\bigskip\noindent
\section{Introduction}

A \emph{Ricci soliton} is a pseudo-Riemannian manifold $(M,g)$, admitting a smooth vector field $X$, such that
\begin{equation}\label{solit}
\mathcal{L}_X g+\varrho=\lambda g,
\end{equation}
where $\mathcal{L}_X$, $\varrho$ and $\lambda$ denote the Lie derivative in the direction of
$X$, the Ricci tensor and a real number, respectively. Sometimes, slightly different conventions are used. For example, in \cite{BeCr}, the Ricci soliton equation reads \lq\lq$\mathcal{L}_V g+2\varrho+2\lambda g=0$\rq\rq, which corresponds to \eqref{solit} taking $V=2X$ and changing sign to the constant $\lambda$. Clearly, Einstein manifolds satisfy the above equation, so that they are considered as trivial Ricci solitons. 
Referring to the above Equation~\eqref{solit}, a Ricci soliton is said to be \emph{shrinking}, \emph{steady} or
\emph{expanding}, according to whether $\lambda>0$, $\lambda=0$ or $\lambda<0$,
respectively.


Ricci solitons were introduced in Riemannian Geometry \cite{Ham} as the self-similar solutions of the \emph{Ricci flow}, and play an important role in understanding its singularities. A wide survey on Riemannian Ricci solitons may be found in \cite{Cao}. Recently, Ricci solitons  have also been extensively studied in pseudo-Riemannian settings. Among the several reasons for the growing interest of Theoretical Physicists toward Ricci solitons, we cite their relation with String Theory, and the fact that Equation~\eqref{solit} is a special case of the Einstein field equations. For some recent results and further references on pseudo-Riemannian Ricci solitons, we may refer to \cite{isr},\cite{CF},\cite{CZ},\cite{PT} and references therein. 

Given a class of pseudo-Riemannian manifolds $(M,g)$, it is then a natural problem to solve Equation~\eqref{solit}, especially when it holds for a smooth vector field playing a special role in the geometry of the manifold itself.  Under this point of view, the Reeb vector field $\xi$ of a contact metric manifold would be a natural candidate. However, in these settings, a strong rigidity result holds: the Reeb vector field of a contact Riemannian (or Lorentzian) manifold $(M,\varphi,\xi,\eta,g)$ satisfies \eqref{solit} if and only if $(M,\varphi,\xi,\eta,g)$ is $K$-contact Einstein \cite{CP2}. Thus, contact Riemannian or Lorentzian Ricci solitons are necessarily trivial.
   
Paracontact metric structures were introduced in \cite{KW}, as a natural odd-dimensional counterpart to paraHermitian structures, just like contact metric structures correspond to the Hermitian ones. Since the basic paper \cite{Z}, in the last years several authors studied paracontact metric structures and their further generalization, namely, almost paracontact metric structures, emphasizing similarities and differences with respect to the most investigated contact case. Some recent results on paracontact and almost paracontact metric structures may be found in  \cite{BeCr},\cite{CIll},\cite{CM},\cite{CPA},\cite{CKM},\cite{DO},\cite{IVZ},\cite{W3},\cite{W4} and references therein. Using the relationship between Ricci solitons and infinitesimal harmonic transformations pointed out in \cite{SS}, it was proved in \cite{CPe} that a {\em paracontact Ricci soliton}, that is, a paracontact metric manifold for which the Reeb vector field $\xi$ satisfies \eqref{solit}, is necessarily expanding and must satisfy $Q\xi =-2n \xi$, where $Q$ denotes the Ricci operator.
 
In this paper, we answer positively the open question whether there exist nontrivial paracontact Ricci solitons, obtaining a complete description of three-dimensional paracontact Ricci solitons. After reporting in Section~2 some basic information about almost paracontact metric structures and infinitesimal harmonic transformations, in Section~3  we investigate three-dimensional paracontact Ricci solitons, emphasizing their relationship with {\em paracontact $(\kappa,\mu)$-spaces}.  Some byproducts of this description of three-dimensional paracontact Ricci solitons are relevant on their own. In particular, we completely describe paracontact metric three-manifolds whose Reeb vector field $\xi$ is an infinitesimal harmonic transformation, and we prove that if $\xi$ is affine Killing, then it is necessarily Killing. Homogeneous and inhomogeneous examples of three-dimensional paracontact Ricci solitons are explicitly described in Section~4, also providing the description in local Darboux coordinates of an arbitrary three-dimensional paracontact metric structure. These examples also show that, differently from the {\em contact} metric case \cite{Bo}, a paracontact $(\kappa,\mu)$-space needs not to be locally isometric to some Lie group. Finally, in Section~5 we shall correct the main result of \cite{BeCr} on three-dimensional normal almost paracontact Ricci solitons (namely, Theorem~3.1 in \cite{BeCr}), showing that in general their Reeb vector field needs not to be conformal Killing, and that in such a case, a three-dimensional normal almost paracontact Ricci soliton is indeed trivial, being a manifold of constant sectional curvature.

\bigskip\noindent
\section{Preliminaries}
\setcounter{equation}{0}

\subsection{Three-dimensional almost paracontact metric structures}
An {\em almost paracontact structure} on a $(2n+1)$-dimensional (connected) smooth manifold $M$ is given by a triple $(\varphi,\xi,\eta)$, where $\varphi$ is a $(1,1)$-tensor, $\xi$ a global vector field and $\eta$ a $1$-form, such that
 \begin{equation}\label{almostpc}
\varphi (\xi)=0, \qquad \eta \circ \varphi =0, \qquad \eta (\xi)=1,  \qquad \varphi ^2 = Id -\eta \otimes \xi
\end{equation}
and the restriction $J$ of $\varphi$ on the horizontal distribution ker$\eta$ is an almost paracomplex structure (that is, the eigensubbundles $D^+, D^-$ corresponding to the eigenvalues $1,-1$ of $J$ have equal dimension $n$).

A pseudo-Riemannian metric $g$ on $M$ is {\em compatible} with the almost paracontact structure $(\varphi,\xi,\eta)$ if
\begin{equation}\label{compg}
g(\varphi X, \varphi Y) =-g(X,Y) +\eta(X)\eta(Y).
\end{equation}
In such a case, $(\varphi,\xi,\eta,g)$ is said to be an {\em almost paracontact metric structure}.
Observe that, by \eqref{almostpc} and \eqref{compg}, $\eta(X)=g(\xi,X)$ for any compatible metric. Any almost paracontact structure admits compatible metrics, which, because of \eqref{compg}, have signature $(n+1,n)$. The {\em fundamental $2$-form} $\Phi$ of an almost paracontact metric structure $(\varphi,\xi,\eta,g)$ is defined by $\Phi(X,Y)=g(X,\varphi Y)$, for all tangent vector fields $X,Y$. If $\Phi=d\eta$, 
then the manifold $(M,\eta,g)$ (or $(M,\varphi,\xi,\eta,g)$) is called a {\em paracontact metric manifold} and $g$ the {\em associated metric}.

\noindent

Throughout the paper, we shall denote with $\nabla$ the Levi-Civita connection and by $R$ the curvature tensor of 
$g$, taken with the sign convention
$$
	R(X,Y)=\nabla_{[X,Y]}-[\nabla_X,\nabla_Y].
$$
An almost paracontact metric structure $(\varphi,\xi,\eta,g)$ is said to be {\em normal} if 
\begin{equation}\label{normal}
[\varphi,\varphi ] -2d\eta\otimes\xi=0 ,
\end{equation} 
where $[\varphi,\varphi]$ is the Nijenhuis torsion tensor of $\varphi$.  In dimension three, normal almost paracontact metric structures are characterized by condition
\begin{equation}\label{trans}
(\nabla _X \varphi)Y=\bar \alpha (g(\varphi X,Y)\xi-\eta(Y) \varphi X)+\bar \beta (g(X,Y)\xi-\eta(Y)X),
\end{equation} 
(see \cite{W3}), for two smooth functions $\bar\alpha, \bar\beta$. 
When \eqref{trans} holds, an almost paracontact metric structure is said to be  (see \cite{W4})
\begin{itemize}
\vspace{2pt}\item[(a)] quasi-para-Sasakian if $\bar \alpha=0\neq \bar \beta $ (when $\bar\beta$ is a constant, a quasi-para-Sasakian structure is called $\beta$-para-Sasakian);
\item[(b)] $\alpha$-para-Kenmotsu if $\bar \alpha \neq 0$ is a constant and $\bar \beta=0$;
\item[(c)] {\em  para-cosymplectic} if it is normal with $\bar \alpha=\bar \beta=0$.
\end{itemize}

\noindent
Generalizing the paracontact metric case, for any almost paracontact metric manifold $(M,\varphi,\xi,\eta,g)$, one can introduce the $(1,1)$-tensor $h$, defined by $hX=\frac{1}{2}(\mathcal L_{\xi} \varphi)X$. Since $\nabla$ is torsionless, one has 
\begin{eqnarray}  \label{h}
hX=\frac{1}{2} ((\nabla_\xi \varphi)X-\nabla_{\varphi X}\xi+\varphi(\nabla_X \xi)).	
\end{eqnarray}
As proved in \cite{CPA}, if $\dim M=3$, then $(M,\varphi,\xi,\eta,g)$ is normal if and only if $h=0$. 
Note that condition $h=0$ is equivalent to $\xi$ being Killing for paracontact metric manifolds, but not for almost paracontact ones (see also Proposition~\ref{p5} below). We also recall the following result.

\begin{proposition}{\bf \cite{CPA}}\label{first}
Let $(M,\varphi,\xi,\eta,g)$ be a $(2n+1)$-dimensional almost paracontact metric manifold. Then, the following properties are equivalent:

\begin{itemize}
\item[(i)]$\xi \in {\rm ker}d\eta$;
\vspace{2pt}\item[(ii)]  $\mathcal L_{\xi} \eta=0$;
\vspace{2pt}\item[(iii)] $\nabla_\xi \xi=0$;
\vspace{2pt}\item[(iv)] $[\xi,X]\in {\rm ker}\eta$, for any $X \in {\rm ker}\eta$. 
\end{itemize}
\end{proposition}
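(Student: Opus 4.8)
The plan is to prove all four equivalences by funneling everything through the single $1$-form $\mathcal{L}_\xi \eta$ and computing it in three different ways, so that each of the conditions (i), (iii), (iv) becomes a restatement of (ii), namely $\mathcal{L}_\xi \eta =0$.

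First I would dispose of (i) $\Leftrightarrow$ (ii) by Cartan's formula. Since $\eta(\xi)=1$ by \eqref{almostpc} is constant, $d(\iota_\xi \eta)=d(\eta(\xi))=0$, so
$$
\mathcal{L}_\xi \eta = d(\iota_\xi \eta)+\iota_\xi\, d\eta=\iota_\xi\, d\eta.
$$
Hence $(\mathcal{L}_\xi \eta)(X)=d\eta(\xi,X)$ for every $X$, and the vanishing of $\mathcal{L}_\xi \eta$ is literally the assertion $\iota_\xi\, d\eta=0$, that is, $\xi\in\ker d\eta$.

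Next, for (ii) $\Leftrightarrow$ (iii), I would rewrite $\mathcal{L}_\xi \eta$ metrically. Recall that $\eta(X)=g(\xi,X)$ follows from \eqref{almostpc} and \eqref{compg}, whence $g(\xi,\xi)=\eta(\xi)=1$ is constant; combining this with metric compatibility and the torsion-freeness of $\nabla$ (so that $\eta([\xi,X])=g(\xi,\nabla_\xi X)-g(\xi,\nabla_X\xi)$ and $g(\xi,\nabla_X\xi)=\tfrac12 X\,g(\xi,\xi)=0$), a one-line computation gives
$$
(\mathcal{L}_\xi \eta)(X)=\xi(\eta(X))-\eta([\xi,X])=g(\nabla_\xi \xi,X)
$$
for all $X$. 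Since $g$ is nondegenerate, $\mathcal{L}_\xi \eta=0$ if and only if $\nabla_\xi \xi=0$.

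Finally, for (ii) $\Leftrightarrow$ (iv), I would use the expression $(\mathcal{L}_\xi \eta)(X)=\xi(\eta(X))-\eta([\xi,X])$ directly: when $X\in\ker\eta$ the first term vanishes, leaving $(\mathcal{L}_\xi \eta)(X)=-\eta([\xi,X])$, so (iv) is exactly the vanishing of $\mathcal{L}_\xi \eta$ on $\ker\eta$. The only point requiring genuine care—and what I regard as the main (though mild) obstacle—is upgrading this to vanishing on all of $TM$ in order to reach (ii). Here one checks that $(\mathcal{L}_\xi \eta)(\xi)=\xi(\eta(\xi))-\eta([\xi,\xi])=0$ holds automatically, and then splits an arbitrary vector as $X=\eta(X)\xi+(X-\eta(X)\xi)$, noting that the second summand lies in $\ker\eta$ because $\eta(\xi)=1$. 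By linearity, vanishing on $\xi$ together with vanishing on $\ker\eta$ forces $\mathcal{L}_\xi \eta=0$, closing the implication (iv) $\Rightarrow$ (ii); the reverse implication is immediate by restricting an already-vanishing $\mathcal{L}_\xi \eta$ to $\ker\eta$. This completes the cycle of equivalences.
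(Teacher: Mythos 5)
Your argument is correct and complete, but note first that this paper contains no proof of Proposition~\ref{first} to compare against: the statement is quoted as a citation from \cite{CPA}, so your proof stands on its own. On the merits, the key step is the identification $(\mathcal L_\xi\eta)(X)=g(\nabla_\xi\xi,X)$, obtained from $\eta=g(\xi,\cdot)$, metric compatibility, torsion-freeness, and $g(\nabla_X\xi,\xi)=\tfrac12 Xg(\xi,\xi)=0$; this is exactly right, and it is also the correct way to bring (iii) into the picture, since nondegeneracy of $g$ (not positive-definiteness, which fails here) is all that is needed to conclude. The tensoriality-plus-splitting argument that upgrades vanishing on $\ker\eta$ together with the automatic identity $(\mathcal L_\xi\eta)(\xi)=0$ to vanishing on all of $TM$ is likewise valid, and you are right that it is the only point needing care in (iv) $\Rightarrow$ (ii). One small caveat about conventions: this paper normalizes the exterior derivative of a $1$-form with a factor $\tfrac12$ (visible in the proof of Proposition~\ref{p5}, where $d\eta(e,\varphi e)=-\tfrac12\,\eta[e,\varphi e]$); under that convention Cartan's formula reads $\mathcal L_\xi\eta=d(\iota_\xi\eta)+2\,\iota_\xi d\eta$, so your displayed identity $\mathcal L_\xi\eta=\iota_\xi d\eta$ is off by a harmless factor of $2$. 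Since both $\ker d\eta$ and the vanishing of $\mathcal L_\xi\eta$ are insensitive to this rescaling, the equivalence (i) $\Leftrightarrow$ (ii), and hence your whole proof, is unaffected.
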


\noindent
Conditions listed in the above Proposition~\ref{first} hold for both paracontact metric and normal almost paracontact metric structures. 

\noindent
Finally, we recall that any almost paracontact metric manifold $(M^{2n+1},\varphi,\xi,\eta,g)$ admits (at least, locally) a {\em $\varphi$-basis} \cite{Z}, that is, a pseudo-orthonormal basis of vector fields of the form $\{\xi, E_1,..,E_n,\varphi E_1, ..,\varphi E_n\},$ where $\xi,E_1,..,E_n$ are space-like vector fields and so, by \eqref{compg}, vector fields $\varphi E_1, ..,\varphi E_n$ are time-like. Observe that when $\dim M=3$, any (local) pseudo-orthonormal basis of ker$\eta$ determines a $\varphi$-basis, up to sign. In fact, if $\{e_2,e_3\}$ is a (local) pseudo-orthonormal basis of ker$\eta$, with $e_3$, time-like, then by \eqref{compg} we have that $\varphi e_2 \in$ker$\eta$ is time-like and orthogonal to $e_2$. So, $\varphi e_2=\pm e_3$ and $\{\xi,e_2,\pm e_3\}$ is a $\varphi$-basis.

\subsection{Infinitesimal harmonic transformations and Ricci solitons}

Denote by $(M,g)$ a pseudo-Riemannian manifold, by $\nabla$ its the Levi-Civita connection and by $f : x \mapsto x'$ a point transformation in $(M, g)$. If $\nabla '(x)$ is obtained bringing back $\nabla (x')$ to $x$ by $f^{-1}$, the {\em Lie difference} at $x$ is defined as $\nabla'(x)-\nabla(x)$. The map $f$ is said to be {\em harmonic} if ${\rm tr}(\nabla'(x)-\nabla(x))=0$.

Given a vector field $V$ on $M$, the Lie derivative $L_{V} \nabla$ at $x$ then corresponds to $\nabla'(x)-\nabla(x)$, where $\nabla'(x)=f^* _t (\nabla(x'))$. Vector field $V$ is said to be an {\em infinitesimal harmonic transformation} if it generates a group of harmonic transformations, that is, 
$${\rm tr}(\mathcal L _V \nabla)=0.$$ 
Clearly, infinitesimal harmonic transformations generalize {\em affine Killing vector fields}, for which $\mathcal L _V \nabla=0$. A vector field $V$ determining a Ricci soliton (that is, satisfying \eqref{solit}) is necessarily an infinitesimal harmonic transformation \cite{SS}. The same argument, initially obtained in the Riemannian case, also applies to pseudo-Riemannian manifolds. We may refer to \cite{SS} and references therein for more details about infinitesimal harmonic transformations.

\section{$3D$ paracontact Ricci solitons}
\setcounter{equation}{0} 

We first provide a local description of all almost paracontact metric three-manifolds (called \emph{natural} in \cite{CPA}) satisfying any of conditions listed in the above Proposition~\ref{first}. Let $(M,\varphi,\xi,\eta,g)$ be such a manifold. Then, because of Proposition~\ref{first}, equation~(\ref{h}) reduces to
\begin{eqnarray} \label{1}
	2hX=\varphi(\nabla_X \xi)-\nabla_{\varphi X}\xi.
\end{eqnarray}
Let now $\{\xi,e,\varphi e\}$ denote a (local) $\varphi$-basis on $M$, with $\varphi e$ time-like. Then, 
\begin{equation}\label{acca}
he=a_1e+a_2\varphi e, \qquad 	 h\varphi e=(-\varphi he)= -a_2e-a_1\varphi e,
\end{equation}
for some smooth functions  $a_1,a_2$. Consequently, 
\begin{eqnarray} \label{h^2}
	\left\|h\right\|^2={\rm tr} h^2=2(a_1^2-a_2^2).
\end{eqnarray}
In particular, by \eqref{acca} and \eqref{h^2} we have that the following conditions are equivalent:
\begin{itemize}
\item[(i)] $h^2=0$, that is, $h$ is two-step nilpotent;
\vspace{2pt}\item[(ii)] tr$h^2=0$;
\vspace{2pt}\item[(iii)] $a_2= \varepsilon a_1=\pm a_1$. 
\end{itemize}
Since $\nabla _e \xi$ is orthogonal to $\xi$, there exist two smooth functions $b_1,b_2$, such that $\nabla_e \xi= b_1e+b_2\varphi e$. So, (\ref{1}) yields $\nabla_{\varphi e} \xi= (b_2-2a_1)e+(b_1-2a_2)\varphi e$. Moreover,
$$\nabla_\xi e=g(\nabla_\xi e,\xi)\xi+g(\nabla_\xi e,e)e-g(\nabla_\xi e,\varphi e)\varphi e=-g(\nabla_\xi e,\varphi e)\varphi e=a_3\varphi e,
$$
where we put $a_3:=g(\nabla_\xi \varphi e,e)$. By similar computations and taking into account the compatibility of $g$, we obtain
\begin{eqnarray} \label{nabla}
\quad \quad   
	\left\{
	\begin{array} 
	{ll}
	   \nabla_e \xi= b_1e+b_2\varphi e, &\quad \nabla_{\varphi e}\xi=(b_2-2a_1)e+(b_1-2a_2){\varphi e}, \\[4pt]
	   \nabla_\xi e=a_3{\varphi e}, \  &\quad \nabla_\xi {\varphi e}=a_3 e, \\[4pt]
		 \nabla_e e=-b_1\xi+a_4{\varphi e}, \ 	&\quad \nabla_{\varphi e}{\varphi e}=(b_1-2a_2)\xi+a_5e, \\[4pt]
		 \nabla_e {\varphi e}=b_2\xi+a_4e, \  &\quad \nabla_{\varphi e} e=(2a_1-b_2)\xi+a_5{\varphi e}, 
	\end{array}
	\right.
\end{eqnarray}

\noindent
for some real smooth functions $a_i,b_j$. Equivalently, the Lie brackets of $\xi,e,\varphi e$ are described by
\begin{equation} \label{2}
\left\{
 \begin{array}{l}
    [\xi,e]=-b_1e+(a_3-b_2)\varphi e,  \\[2pt]
     [\xi,\varphi e]=(a_3+2a_1-b_2)e+(2a_2-b_1)\varphi e, \\[2pt]
     [e,\varphi e]=2(b_2-a_1)\xi+a_4e-a_5\varphi e.
 \end{array}	
\right.
\end{equation}
and must satisfy the Jacoby identity, which, by standard calculations, is proved to be equivalent to the following system equations:
\begin{equation} \label{3}
\left\{ \begin{array}{l}
				\xi(b_2-a_1)-2(a_2-b_1)(b_2-a_1)=0, \\[2pt]
				\xi(a_4)-e(a_3+2a_1-b_2)-\varphi e(b_1)-a_4(2a_2-b_1)-a_5(a_3+2a_1-b_2)=0,	\\[2pt]
				\xi(a_5)+e(2a_2-b_1)+\varphi e(b_2-a_3)+a_4(b_2-a_3)+a_5b_1=0.
\end{array}
\right.
\end{equation}
We then have the following result.

\begin{proposition}\label{p5}
Any three-dimensional almost paracontact metric three-manifold, satisfying any of conditions listed in 
Proposition~{\em\ref{first}}, is locally described by Equations~\eqref{2}-\eqref{3} with respect to a local $\varphi$-basis $\{\xi, e, \varphi e\}$, for some smooth functions $a_i,b_j$. In particular: 
\begin{enumerate}
\item 
 $\eta$ is a contact form if and only if $a_1-b_2\neq 0$, equivalently, tr$(\varphi \nabla \xi)\neq0$;
\vspace{2pt}\item $(\varphi,\xi,\eta,g)$ is a paracontact metric structure if and only if $a_1-b_2=1$; 
\vspace{2pt}\item $h=0$ if and only if $a_1=a_2=0$; 
\vspace{2pt}\item $\xi$ is a Killing vector field if and only if $h=0$ and $b_1=0$; 
\vspace{2pt}\item div$\xi=0$ if and only if $a_2=b_1$.
\end{enumerate}
\end{proposition}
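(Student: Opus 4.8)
The plan is to verify the five statements by a direct reading of components off the structure equations~\eqref{nabla} and~\eqref{2}, using only the algebraic identities~\eqref{almostpc},~\eqref{compg}, the definition~\eqref{acca} of $h$, and the signature of the $\varphi$-basis, in which $g(\xi,\xi)=g(e,e)=1$ and $g(\varphi e,\varphi e)=-1$. Throughout, the equivalent conditions of Proposition~\ref{first} are in force; I shall use in particular $\nabla_\xi\xi=0$ and $\xi\in\ker d\eta$. I would dispose first of the two statements that are essentially algebraic. Statement~(3) is immediate: since $h\xi=\tfrac12(\mathcal L_\xi\varphi)\xi=0$ and $he,h\varphi e$ have the coordinates recorded in~\eqref{acca}, the operator $h$ vanishes exactly when $a_1=a_2=0$. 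Statement~(5) is the computation of the trace $\operatorname{div}\xi=\sum_i\varepsilon_i\,g(\nabla_{E_i}\xi,E_i)$ over the $\varphi$-basis; inserting the first line of~\eqref{nabla} together with the time-like sign of $\varphi e$ gives $\operatorname{div}\xi=2(b_1-a_2)$, whence the stated criterion.

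For statements~(1) and~(2) I would compute the exterior derivative of $\eta$ on the $\varphi$-basis. Since $\xi\in\ker d\eta$, the only possibly nonzero component is $d\eta(e,\varphi e)$, and evaluating it through the bracket $[e,\varphi e]$ from~\eqref{2} yields $d\eta(e,\varphi e)=a_1-b_2$, with the normalization of $d$ under which $\Phi=d\eta$ is the defining condition of a paracontact metric structure. As $d\eta$ is then supported on $\ker\eta$, the top form $\eta\wedge d\eta$ is a nonzero multiple of $a_1-b_2$, so $\eta$ is a contact form precisely when $a_1-b_2\neq0$; the reformulation via $\operatorname{tr}(\varphi\nabla\xi)$ follows from the parallel trace computation $\operatorname{tr}(\varphi\nabla\xi)=-2(a_1-b_2)$, using $\varphi^2=\operatorname{Id}-\eta\otimes\xi$ on $e,\varphi e$. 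For~(2) I would pair $d\eta(e,\varphi e)$ against the fundamental form, $\Phi(e,\varphi e)=g(e,\varphi^2 e)=g(e,e)=1$, so that $\Phi=d\eta$ amounts to the single scalar equation $a_1-b_2=1$, the remaining components of $\Phi$ and $d\eta$ vanishing identically by antisymmetry and by $\eta(\varphi\,\cdot)=0$.

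Finally, statement~(4) is the vanishing of $\mathcal L_\xi g$. Writing $(\mathcal L_\xi g)(X,Y)=g(\nabla_X\xi,Y)+g(\nabla_Y\xi,X)$ and evaluating on the $\varphi$-basis with~\eqref{nabla}, the three independent symmetric components come out proportional to $b_1$, to $a_1$, and to $2a_2-b_1$, while all components involving $\xi$ vanish automatically from $\nabla_\xi\xi=0$; their simultaneous vanishing is thus equivalent to $a_1=a_2=b_1=0$, that is, to $h=0$ together with $b_1=0$ by statement~(3). The content of the proposition is therefore entirely computational, and the only real obstacle is bookkeeping: one must track the time-like sign $g(\varphi e,\varphi e)=-1$ consistently through every inner product and trace, and fix once and for all the normalization of $d\eta$ so that the paracontact condition reads $a_1-b_2=1$ rather than being off by a factor $\tfrac12$. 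Everything else reduces to substituting~\eqref{nabla} and~\eqref{2} into the relevant definitions.
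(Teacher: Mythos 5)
Your proposal is correct and follows essentially the same route as the paper's own proof: each item is read off componentwise from \eqref{nabla} and \eqref{2} on the $\varphi$-basis, with $d\eta(e,\varphi e)=a_1-b_2$ and $\Phi(e,\varphi e)=1$ handling (1)--(2), \eqref{acca} handling (3), the Lie-derivative components $2b_1$, $-2a_1$, $2(2a_2-b_1)$ handling (4), and the signed trace giving ${\rm div}\,\xi=2(b_1-a_2)$ for (5). Your explicit attention to the time-like sign of $\varphi e$ and to the normalization of $d\eta$ matches the conventions the paper uses implicitly.
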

\begin{proof}
From (\ref{2}) we get
$$\eta \wedge d\eta(\xi,e,\varphi e)=d\eta(e,\varphi e)=-\frac 12 \eta[e,\varphi e]=a_1-b_2$$
and
\begin{equation}\label{trphi}
{\rm tr}(\varphi \nabla \xi)=2(b_2-a_1).
\end{equation}
Thus, $\eta \wedge d\eta(\xi,e,\varphi e)\neq0$ if and only if $a_1-b_2\neq0$, which proves (1).

\noindent
Now, as $d\eta(\xi,\varphi e)=d\eta(\xi,e)=0=g(\xi,e)=g(\xi,\varphi e)$ and $g(e,\varphi(\varphi e))=1$, we have that $d\eta=g(\cdot,\varphi \cdot)$ if and only if $a_1-b_2=1$, which proves (2).

\noindent
Next, (3) follows at once from \eqref{acca} and $h\xi=0$. To prove (4), we compute $(\mathcal L_{\xi} g)(X,Y)=g(\nabla_X \xi,Y)+g(\nabla_Y \xi,X)$. Also taking into account  $\nabla_\xi \xi =0$, we find $(\mathcal L_{\xi} g)(\xi,\cdot)=0$. Moreover, by (\ref{nabla}), we get 
$$\begin{array}{l} (\mathcal L_{\xi} g)(e,e)=2g(\nabla_e \xi,e)=2b_1, \\[4pt]
(\mathcal L_{\xi} g)(\varphi e,\varphi e)=2g(\nabla_{\varphi e} \xi,\varphi e)=2(2a_2-b_1),\\[4pt]
(\mathcal L_{\xi} g)(e,\varphi e)=g(\nabla_{e} \xi,\varphi e)+g(\nabla_{\varphi e} \xi,e)=-2a_1.
\end{array}$$
Therefore, $\xi$ is a Killing vector field if and only if $a_1=a_2=b_1=0$, that is, $h=0$ and $b_1=0$.
Finally,
\begin{equation}\label{divxi}
{\rm div} \xi=g(\nabla_\xi \xi,\xi)+g(\nabla_e \xi,e)-g(\nabla_{\varphi e} \xi,\varphi e)=2(b_1-a_2),
\end{equation}
which proves (5). 
\end{proof}

We now focus on the paracontact metric case. By the above Proposition~\ref{p5}, if $(\varphi,\xi,\eta,g)$ is a paracontact metric structure, then $b_2=a_1-1$, which, by the first equation in \eqref{3}, also yields $b_1=a_2$. Therefore, any three-dimensional paracontact metric three-manifold is (locally) described by equations
\begin{equation} \label{2pc}
\left\{
 \begin{array}{l}
    [\xi,e]=-a_2e+(a_3-a_1+1)\varphi e,  \\[2pt]
     [\xi,\varphi e]=(a_3+a_1+1)e+a_2\varphi e, \\[2pt]
     [e,\varphi e]=-2\xi+a_4e-a_5\varphi e,
 \end{array}	
\right.
\end{equation}
for some smooth functions $a_1,\dots,a_5$, satisfying 
\begin{eqnarray} \label{3pc}
\left\{ \begin{array}{l}
				\xi(a_4)-e(a_3+a_1)-\varphi e(a_2)-a_4 a_2-a_5(a_3+a_1+1)=0,	\\[2pt]
				\xi(a_5)+e(a_2)+\varphi e(a_1-a_3)+a_4(a_1-a_3-1)+a_5 a_2=0.
\end{array}
\right.
\end{eqnarray}
Consequently, the Levi-Civita connection is described by
\begin{eqnarray} \label{nablapc}
\quad \quad   
	\begin{array} 
	{lll}
\quad	  \nabla_{\xi} \xi=0 & \quad \nabla_e \xi= a_2e+(a_1-1)\varphi e, &\quad \nabla_{\varphi e}\xi=-(a_1+1)e-a_2{\varphi e}, \\[4pt]
\quad \nabla_\xi e=a_3{\varphi e}, & \quad \nabla_e e=-a_2\xi+a_4{\varphi e}, &\quad  
\nabla_{\varphi e} e=(a_1+1)\xi+a_5{\varphi e} \\[4pt]
\quad		\nabla_\xi {\varphi e}=a_3 e, &  \quad\nabla_e {\varphi e}=(a_1-1)\xi+a_4e, 	&\quad \nabla_{\varphi e}{\varphi e}=-a_2\xi+a_5e,
	\end{array}
\end{eqnarray}

\noindent
In order to describe three-dimensional paracontact metric structures, whose Reeb vector field $\xi$ is an infinitesimal {harmonic transformation,} we now use the above equations to calculate tr$\mathcal L_{\xi} \nabla$. Explicitly, {since $(\mathcal L_Z\nabla)(X,Y)=\mathcal L_Z \nabla_X Y-\nabla_{\mathcal L_Z X}Y-\nabla_{X} \mathcal L_Z Y$}, we get
{
\begin{align*}
(\mathcal L_{\xi} \nabla)(\xi,\xi)=&0, \\[2pt]	   
(\mathcal L_{\xi} \nabla)(e,e)=&-\big(\xi(a_2)+2a_2^2+2a_1(a_3-a_1+1)\big)\xi+\big(e(a_2)+2a_1a_4\big)e \\&+\big(\xi(a_4)-e(a_3-a_1)+3a_2a_4-a_5(a_3-a_1+1)\big) \varphi e, \\[2pt]	   
(\mathcal L_{\xi} \nabla)(\varphi e ,\varphi e)=& -\big(\xi(a_2)-2a_2^2+2a_1(a_3+a_1+1)\big)\xi-\big(\varphi e(a_2)+2a_1a_5\big)\varphi e \\&+\big(\xi(a_5)-\varphi e(a_3+a_1)-3a_2a_5-a_4(a_3+a_1+1)\big)e
\end{align*}
}
and so,
\begin{align*}
{\rm tr}(\mathcal L_{\xi} \nabla)=& (\mathcal L_{\xi} \nabla)(\xi,\xi)+(\mathcal L_{\xi} \nabla)(e ,e)-(\mathcal L_{\xi} \nabla)(\varphi e ,\varphi e) \\[2pt]	   
=& {4}\big(a_1^2-a_2^2\big)\xi \\
+&\big(e(a_2)+\varphi e(a_3+a_1)-\xi(a_5)+2a_1a_4 +3a_2a_5+a_4(a_3+a_1+1)\big)e 
\\
+&\big(e(a_1-a_3)+\varphi e(a_2)+\xi(a_4)+2a_1a_5 +{3a_2a_4- }a_5(a_3-a_1+1)\big) \varphi e.
\end{align*}
Substituting $\xi(a_4),\xi(a_5)$ from \eqref{3pc}, it is easily seen that ${\rm tr}(\mathcal L_{\xi} \nabla)=0$ if and only if $a_2=\varepsilon a_1=\pm a_1$ and $(e+\varepsilon \varphi e)(a_1)+2\varepsilon a_1a_4+2a_1a_5=0$. So, we proved the following result.

\begin{proposition}\label{xiIAT}
Let $(M,\varphi,\xi,\eta,g)$ be a three-dimensional paracontact metric manifold. Then, the Reeb vector field $\xi$ is an infinitesimal harmonic transformation if and only if the manifold  is locally described by 
\begin{equation} \label{2h2}
\left\{
 \begin{array}{l}
    [\xi,e]=-\varepsilon a_1 e+(a_3-a_1+1)\varphi e,  \\[2pt]
     [\xi,\varphi e]=(a_3+a_1+1)e+\varepsilon a_1\varphi e, \\[2pt]
     [e,\varphi e]=-2\xi+a_4e-a_5\varphi e,
 \end{array}	
\right.
\end{equation}
with respect to a local $\varphi$-basis $\{\xi, e, \varphi e\}$, for some smooth functions $a_1,a_3,a_4,a_5$, satisfying 
\begin{eqnarray} \label{3h2}
\left\{ \begin{array}{l}
				\xi(a_4)-e(a_3) +\varepsilon a_1 a_4-a_5(a_3-a_1+1)=0,	\\[2pt]
				\xi(a_5)-\varphi e(a_3)-\varepsilon a_1 a_5 -a_4(a_3+a_1+1)=0, \\[2pt]
	(e+\varepsilon \varphi e)(a_1)+2\varepsilon a_1a_4+2a_1a_5=0.			
\end{array}
\right.
\end{eqnarray}
\end{proposition}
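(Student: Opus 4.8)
The plan is to compute the components of $\mathrm{tr}(\mathcal L_\xi\nabla)$ directly from the connection formulas \eqref{nablapc} and then read off the vanishing condition. First I would recall that for the Lie derivative of the connection in the direction of $\xi$ one has $(\mathcal L_\xi\nabla)(X,Y)=\mathcal L_\xi\nabla_XY-\nabla_{\mathcal L_\xi X}Y-\nabla_X\mathcal L_\xi Y$, and that the trace is taken over the $\varphi$-basis $\{\xi,e,\varphi e\}$ with the signs dictated by the signature: since $\varphi e$ is time-like, the trace is $(\mathcal L_\xi\nabla)(\xi,\xi)+(\mathcal L_\xi\nabla)(e,e)-(\mathcal L_\xi\nabla)(\varphi e,\varphi e)$. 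The entries $(\mathcal L_\xi\nabla)(\xi,\xi)$, $(\mathcal L_\xi\nabla)(e,e)$ and $(\mathcal L_\xi\nabla)(\varphi e,\varphi e)$ are exactly the three displayed expressions in the excerpt, so the bulk of the symbolic work is already in place; I would verify a representative term or two (for instance the $\xi$-component of $(\mathcal L_\xi\nabla)(e,e)$, which forces the appearance of the factor $a_1^2-a_2^2$) to make sure the bracket relations \eqref{2pc} and the derivatives of the coefficient functions are handled correctly.

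Next I would assemble the trace, obtaining a vector whose $\xi$-, $e$- and $\varphi e$-components are the three expressions displayed just before the proposition. The decisive simplification is to eliminate $\xi(a_4)$ and $\xi(a_5)$ using the integrability constraints \eqref{3pc}: substituting these into the $e$- and $\varphi e$-components removes the $\xi$-derivatives and collapses several cross terms. After this substitution, the $\xi$-component is proportional to $a_1^2-a_2^2$, so its vanishing is equivalent to $a_2^2=a_1^2$, i.e. $a_2=\varepsilon a_1$ with $\varepsilon=\pm1$; note that this is precisely the two-step nilpotency condition $h^2=0$ via \eqref{h^2} and the equivalence (i)--(iii) established earlier. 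I would then impose $a_2=\varepsilon a_1$ in the remaining two components and check that the $e$- and $\varphi e$-components both reduce to multiples of the single scalar equation $(e+\varepsilon\varphi e)(a_1)+2\varepsilon a_1a_4+2a_1a_5=0$.

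Finally I would rewrite the structure equations under the hypothesis $a_2=\varepsilon a_1$. Feeding $b_1=a_2=\varepsilon a_1$ and $b_2=a_1-1$ back into \eqref{2pc} yields the brackets \eqref{2h2}, and substituting $a_2=\varepsilon a_1$ into the Jacobi system \eqref{3pc} turns its two equations into the first two lines of \eqref{3h2}; the third line of \eqref{3h2} is exactly the scalar condition produced by the trace computation above. Collecting these, the Reeb field $\xi$ is an infinitesimal harmonic transformation precisely when \eqref{2h2} and \eqref{3h2} hold, which is the assertion.

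The main obstacle I expect is purely bookkeeping: keeping the signature signs consistent throughout (the time-like $\varphi e$ contributes with a minus sign in both the trace and in several inner products entering \eqref{nablapc}), and correctly carrying the derivatives of the non-constant functions $a_i$ through the Lie-derivative terms. The one genuinely structural point, rather than mechanical, is recognizing that after substituting \eqref{3pc} the $\xi$-component isolates $a_1^2-a_2^2$; once that is seen, setting it to zero splits the analysis into the clean case $a_2=\varepsilon a_1$ and makes the remaining two components collapse to a single scalar equation, so no case distinction beyond the choice of sign $\varepsilon$ is needed.
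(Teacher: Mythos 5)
Your proposal is correct and follows essentially the same route as the paper: compute $(\mathcal L_\xi\nabla)$ on the $\varphi$-basis from \eqref{nablapc}, form the signed trace $(\mathcal L_\xi\nabla)(\xi,\xi)+(\mathcal L_\xi\nabla)(e,e)-(\mathcal L_\xi\nabla)(\varphi e,\varphi e)$, eliminate $\xi(a_4),\xi(a_5)$ via \eqref{3pc}, and read off that vanishing is equivalent to $a_2=\varepsilon a_1$ together with $(e+\varepsilon\varphi e)(a_1)+2\varepsilon a_1a_4+2a_1a_5=0$. Only one cosmetic point: the $\xi$-component equals $4(a_1^2-a_2^2)$ even before substituting \eqref{3pc}, and passing from \eqref{3pc} (with $a_2=\varepsilon a_1$) to the first two lines of \eqref{3h2} also uses the scalar condition to absorb the $(e+\varepsilon\varphi e)(a_1)$ terms --- exactly as the paper leaves implicit.
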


\noindent
We observe that by the above Proposition, if $\xi$ is an infinitesimal harmonic transformation, then $a_2=\varepsilon a_1$ and so, tr$h^2=0$, compatibly with the result proved in \cite{CP2} for paracontact metric manifolds of arbitrary dimension, whose Reeb vector field is an infinitesimal harmonic transformation.

Before describing three-dimensional paracontact Ricci solitons, we now consider the case when $\xi$ is an affine Killing vector field. In such a case, from \eqref{nablapc} we get 
\begin{eqnarray*}
	(\mathcal L_{\xi} \nabla)(\xi , e)=\big(\xi(a_2)+2a_1a_3\big)e+\big(\xi(a_1) + 2a_2a_3\big)\varphi e.
\end{eqnarray*}
Hence, $(\mathcal L_{\xi} \nabla)(\xi, e)=0$, $(\mathcal L_{\xi} \nabla)(e , e)=0$ and $(\mathcal L_{\xi} \nabla)(\varphi e ,\varphi e)=0$ respectively give 
\begin{eqnarray} 
\left\{ \begin{array}{l}
\xi(a_2)+2a_1a_3=0, \\[2pt]	
\xi(a_2)+2a_2^2+2a_1(a_3-a_1+1)=0, \\[2pt]	   
\xi(a_2)-2a_2^2+2a_1(a_3+a_1+1)=0,
\end{array}
\right.
\end{eqnarray}
%
%
%
which easily yield $a_1=a_2=0$, that is, by \eqref{acca}, $h=0$. So, $\xi$ is Killing and, taking into account Theorem~2.2 in \cite{CIll}, we proved the following result.

\begin{theorem}
If the Reeb vector field $\xi$ of a three-dimensional paracontact metric manifold $(M,\varphi,\xi,\eta,g)$ is affine Killing, then $\xi$ is Killing (and so, $M$ is paraSasakian).
\end{theorem}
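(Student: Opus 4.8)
The plan is to exploit the explicit Levi-Civita connection \eqref{nablapc} together with the affine Killing condition $\mathcal L_\xi\nabla=0$. Using the identity $(\mathcal L_\xi\nabla)(X,Y)=\mathcal L_\xi\nabla_X Y-\nabla_{\mathcal L_\xi X}Y-\nabla_X\mathcal L_\xi Y$ recalled above, and the brackets \eqref{2pc}, I would evaluate $\mathcal L_\xi\nabla$ on the pairs $(\xi,e)$, $(e,e)$ and $(\varphi e,\varphi e)$ of $\varphi$-basis vectors. Among the resulting components, there are three scalar equations involving only $a_1,a_2,a_3$ and $\xi(a_2)$, which decouple cleanly from the remaining data $a_4,a_5$; these are the ones I would isolate.

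Concretely, the three relations to record are $\xi(a_2)+2a_1a_3=0$ (the $e$-component of $(\mathcal L_\xi\nabla)(\xi,e)$), then $\xi(a_2)+2a_2^2+2a_1(a_3-a_1+1)=0$ and $\xi(a_2)-2a_2^2+2a_1(a_3+a_1+1)=0$ (the $\xi$-components of $(\mathcal L_\xi\nabla)(e,e)$ and $(\mathcal L_\xi\nabla)(\varphi e,\varphi e)$, respectively). The next step is purely algebraic: subtracting the first equation from each of the other two eliminates both $\xi(a_2)$ and the awkward factor $a_3$, leaving $a_2^2-a_1^2+a_1=0$ and $-a_2^2+a_1^2+a_1=0$. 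Adding these two forces $a_1=0$, and substituting back into either one gives $a_2=0$.

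Once $a_1=a_2=0$ is established, the conclusion follows from results already in hand. By \eqref{acca} (equivalently, Proposition~\ref{p5}(3)) this means $h=0$; and since $h=0$ is equivalent to $\xi$ being Killing in the paracontact metric case (one can also apply Proposition~\ref{p5}(4) directly, noting that $b_1=a_2=0$ as well), $\xi$ is a Killing vector field. Finally, invoking Theorem~2.2 of \cite{CIll}, a three-dimensional paracontact metric manifold whose Reeb field is Killing is paraSasakian, which completes the argument.

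The main obstacle here is conceptual rather than computational: being affine Killing is a priori much weaker than being Killing, and the mere trace condition $\mathrm{tr}(\mathcal L_\xi\nabla)=0$ yields only the nilpotency $a_2=\pm a_1$ (that is, $\mathrm{tr}\,h^2=0$), as seen in the discussion preceding Proposition~\ref{xiIAT}. The point is therefore to extract enough of the full tensor $\mathcal L_\xi\nabla$—specifically the $(\xi,e)$ component in addition to the two diagonal ones—so that the overdetermined system collapses $a_2=\pm a_1$ all the way to $a_1=a_2=0$. The one place where care is genuinely needed is the bookkeeping in computing $(\mathcal L_\xi\nabla)(\xi,e)$, since both brackets $[\xi,e]$ and $[\xi,\varphi e]$ from \eqref{2pc} enter and must be combined correctly with \eqref{nablapc}.
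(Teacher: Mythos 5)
Your proposal is correct and follows essentially the same route as the paper: the paper derives exactly the same three scalar equations from the $e$-component of $(\mathcal L_{\xi}\nabla)(\xi,e)$ and the $\xi$-components of $(\mathcal L_{\xi}\nabla)(e,e)$ and $(\mathcal L_{\xi}\nabla)(\varphi e,\varphi e)$, concludes $a_1=a_2=0$ (hence $h=0$, hence $\xi$ Killing), and invokes Theorem~2.2 of \cite{CIll} for the paraSasakian conclusion. The only difference is that you spell out the elimination argument (subtracting the first equation from the other two and adding), which the paper compresses into \lq\lq which easily yield $a_1=a_2=0$\rq\rq.
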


We now determine the Ricci tensor of any paracontact metric three-manifold whose Reeb vector field is an infinitesimal harmonic transformation. Using \eqref{nablapc} with $a_2=\varepsilon a_1$ and taking into account \eqref{3h2}, standard calculations yield
\begin{equation}\label{curvpc}
\left\{
\begin{array}{l}
R(\xi,e)\xi= -\big(\varepsilon \xi(a_1)+2a_1a_3{+}1 \big)e -\big(\xi(a_1)+2\varepsilon a_1a_3 \big) \varphi e, \\[2pt]	
R(\xi, \varphi e)\xi= \big(\xi(a_1)+2{\varepsilon}a_1a_3 \big)e +\big(\varepsilon \xi(a_1)+{2a_1a_3}-1 \big) \varphi e, \\[2pt]
R(e, \varphi e)\xi= 0, \\[2pt]
R(e, \varphi e)e= \big(\varphi e (a_4)-e(a_5) +1-2a_3+a_4^2-a_5^2\big) \varphi e,  	
\end{array}
\right.
\end{equation}
which easily imply that with respect to $\{\xi, e, \varphi e \}$, the Ricci tensor $\varrho$ is completely described by
\begin{equation}\label{rhopc}
\varrho =\left(\begin{array}{ccc}
-2 & 0 & 0 \\[2pt]
0 & -B-\varepsilon A & A \\[2pt]
0 & A & B-\varepsilon A
\end{array}\right),  \quad \begin{array}{l} A:=\xi(a_1)+{2\varepsilon a_1a_3, } \\[4pt]
B:= e(a_5)-\varphi e (a_4)+2a_3-a^2_4+a_5^2. \end{array}
\end{equation}
Observe that by \eqref{rhopc}, we see that the Ricci operator $Q$ satisfies $Q\xi=-2\xi$, compatibly with the result of \cite{CP2} concerning the case when $\xi$ is an infinitesimal harmonic transformation.

Next, from \eqref{curvpc} we have that $R(\xi,e,\xi,e)=-\varepsilon A-1$. On the other hand,  it is well known that in dimension three, the curvature tensor $R$ satisfies
\begin{align} \label{curv3d}
	R(X,Y,Z,V)= & \ g(X,Z)\varrho(Y,V)-g(Y,Z)\varrho(X,V)+g(Y,V)\varrho(X,Z) \\
	&-g(X,V)\varrho(Y,Z)-\frac{r}{2}\big( g(X,Z)g(Y,V)-g(Y,Z)g(X,V) \big), \nonumber
\end{align}
where $r$ denotes the scalar curvature.  In particular, for $X=Z=\xi$ and $Y=V=e$, we then get $R(\xi,e,\xi,e)=\varrho(e,e)-2 - \frac{r}{2}$ and so,  $ \varrho(e,e)=- \varepsilon A + \frac{r}{2} +1$. Comparing with \eqref{rhopc}, we then find $B=-\frac{r}{2}-1$. Consequently, \eqref{rhopc} becomes
\begin{equation}\label{newrhopc}
\varrho =\left(\begin{array}{ccc}
-2  & 0           													& 0 \\[4pt]
0   & -\varepsilon A+  \frac{r}{2} +1 & A \\[4pt]
0   & A & -\varepsilon A - \frac{r}{2} -1
\end{array}\right),  
\end{equation}
%
%
Next, for any paracontact metric three-manifold $(M,\varphi,\xi,\eta,g)$, if $h^2=0$, then applying \eqref{nablapc} with $a_2=\varepsilon a_1$, we easily find that, with respect to $\{\xi, e, \varphi e \}$, 
\begin{equation}\label{liexi}
\mathcal L _{\xi} g =\left(\begin{array}{ccc}
0 & 0 & 0 \\[2pt]
0 & 2\varepsilon a_1 & -2a_1 \\[2pt]
0 & -2a_1 & 2\varepsilon a_1
\end{array}\right).
\end{equation}
Therefore, $\xi$ satisfies equation \eqref{solit} if and only if
\begin{equation}\label{RSpc}
\left\{
\begin{array}{l}
\lambda=-2,  \\
A=\xi(a_1)+2\varepsilon a_1a_3=2a_1, \\
r= - 6.
\end{array}
\right.
\end{equation}
%
%
%
Thus, we proved the following result.

\begin{theorem}\label{thRS}
A three-dimensional paracontact metric manifold $(M,\varphi,\xi,\eta,g)$ is a paracontact Ricci soliton if and only if the manifold  is locally described by equations \eqref{2h2},\eqref{3h2} and \eqref{RSpc}, with respect to a local $\varphi$-basis $\{\xi, e, \varphi e\}$, for some smooth functions $a_1,a_3,a_4,a_5$. In particular, the Ricci soliton is necessarily expanding.
\end{theorem}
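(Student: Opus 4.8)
The plan is to reduce the soliton equation \eqref{solit} to a handful of scalar conditions by expressing both sides in a local $\varphi$-basis and then invoking the structural results already assembled above. The proof splits naturally into the two implications of the stated equivalence, but both rest on the same entrywise matrix comparison.

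First I would treat the forward implication. Suppose $\xi$ satisfies \eqref{solit}. By the result of \cite{SS}, any vector field generating a Ricci soliton is an infinitesimal harmonic transformation, hence $\mathrm{tr}(\mathcal{L}_\xi \nabla)=0$. Proposition~\ref{xiIAT} then applies directly, yielding the local description \eqref{2h2}--\eqref{3h2} with $a_2=\varepsilon a_1$; in particular $h^2=0$. This is precisely the hypothesis under which the Ricci tensor was computed in \eqref{newrhopc} and the Lie derivative $\mathcal{L}_\xi g$ in \eqref{liexi}, so both matrices are now available with respect to $\{\xi,e,\varphi e\}$. I would then write \eqref{solit} as a matrix identity. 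Since $\{\xi,e,\varphi e\}$ is a $\varphi$-basis with $\varphi e$ time-like, one has $g=\mathrm{diag}(1,1,-1)$, so $\lambda g=\mathrm{diag}(\lambda,\lambda,-\lambda)$. Adding \eqref{liexi} to \eqref{newrhopc} and comparing entry by entry produces the conditions: the $(\xi,\xi)$-entry forces $\lambda=-2$; the off-diagonal $(e,\varphi e)$-entry forces $A=2a_1$; and each diagonal entry, after substituting $A=2a_1$ and $\lambda=-2$, collapses to $r=-6$. These are exactly \eqref{RSpc}. For the converse, assuming \eqref{2h2}, \eqref{3h2} and \eqref{RSpc}, the same two matrices are in force (since $a_2=\varepsilon a_1$ is built into \eqref{2h2}), and the identical entrywise check verifies \eqref{solit} with $\lambda=-2$. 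Finally, since $\lambda=-2<0$, the soliton is expanding, in agreement with \cite{CPe}.

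I do not expect a genuine obstacle here, since the heavy lifting has already been done upstream: the infinitesimal-harmonic characterization of Proposition~\ref{xiIAT}, the curvature computation \eqref{curvpc}, the passage from \eqref{rhopc} to \eqref{newrhopc} via the three-dimensional curvature identity \eqref{curv3d}, and the Lie-derivative formula \eqref{liexi}. The only point demanding care is the bookkeeping of the metric signature—the time-like sign on $\varphi e$, which turns $\lambda g$ into $\mathrm{diag}(\lambda,\lambda,-\lambda)$ rather than $\lambda\,\mathrm{Id}$—so that the two diagonal comparisons are carried out against the correct signs and are seen to be consistent (both returning $r=-6$). Once this is respected, the equivalence and the expanding conclusion follow immediately from reading off the four matrix entries.
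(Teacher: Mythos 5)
Your proposal is correct and follows essentially the same route as the paper: the paper also deduces the local form \eqref{2h2}--\eqref{3h2} from the infinitesimal harmonic transformation property (soliton $\Rightarrow$ IHT via \cite{SS}, then Proposition~\ref{xiIAT}), and then obtains \eqref{RSpc} by comparing $\mathcal{L}_\xi g+\varrho$ from \eqref{liexi} and \eqref{newrhopc} against $\lambda g=\mathrm{diag}(\lambda,\lambda,-\lambda)$ entry by entry, with $\lambda=-2<0$ giving the expanding conclusion. Your signature bookkeeping and the consistency of the two diagonal entries (both yielding $r=-6$) match the paper's computation exactly.
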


\noindent
In the following section, we shall use the above Theorem~\ref{thRS} to describe explicitly some nontrivial three-dimensional paracontact Ricci solitons. 

{We now end this section clarifying the relationship between three-dimensional paracontact Ricci solitons and $(\kappa,\mu)$-spaces. By definition, a \emph{paracontact $(\kappa,\mu)$-space} is a paracontact metric manifold $(M,\varphi,\xi,\eta,g)$, satisfying the curvature condition
\begin{equation} \label{kappamu}
{R}(X,Y){\xi}=\kappa ({\eta}(X)Y-{\eta}(Y)X)+\mu({\eta}(X){h}Y-{\eta}(Y){h}X),
\end{equation}
for all vector fields $X,Y$ on $M$, where  $\kappa$ and $\mu$ are smooth functions. These manifolds generalize the paraSasakian ones, for which $\kappa=-1$ and $\mu$ is undetermined. We may refer to \cite{CKM} and references therein for recent results on paracontact $(\kappa,\mu)$-spaces, essentially focused on the case when $\kappa$ and $\mu$ are constant. {Observe that our curvature convention is opposite to the one used in \cite{CKM}}.

Because of its tensorial character, it suffices to check equation~\eqref{kappamu} on a $\varphi$-basis.
By Proposition~\ref{xiIAT}, when $\xi$ is an infinitesimal harmonic transformation, $(M,\varphi,\xi,\eta,g)$ is locally described by \eqref{2h2} and \eqref{3h2}.  Henceforth, from \eqref{curvpc} and the definition of $A$ in \eqref{rhopc}, we have
$$
R(\xi,e)\xi= -\big(\varepsilon A {+}1 \big)e -A \varphi e, \quad	
R(\xi, \varphi e)\xi= A e +\big(\varepsilon A-1 \big) \varphi e, \quad
R(e, \varphi e)\xi= 0,
$$
which, compared with \eqref{kappamu}, easily yields that $(M,\varphi,\xi,\eta,g)$ is a $(\kappa,\mu)$-space, with $\kappa=-1$ and $a_1 \mu=-\varepsilon A$.  

In particular, if $(M,\varphi,\xi,\eta,g)$ is a three-dimensional nontrivial paracontact Ricci soliton (Theorem~\ref{thRS}), then \eqref{RSpc} yields $\mu=-2\varepsilon$. Conversely, suppose that $(M,\varphi,\xi,\eta,g)$ is a three-dimensional paracontact $(\kappa,\mu)$-space, with $\kappa=-1$ and $\mu=-2\varepsilon$. Since $\kappa=-1$, tensor $h$ is two-step nilpotent (\cite{CKM}, Lemma~3.2). Therefore, $(M,\varphi,\xi,\eta,g)$ is locally described by \eqref{2pc},\eqref{3pc} with $a_2=\varepsilon a_1$. In particular, $\mathcal L_{\xi} g$ is then given by \eqref{liexi}.

\noindent
By the $(\kappa,\mu)$-condition \eqref{kappamu} with  $\kappa=-1$ and $\mu=-2\varepsilon$, we have
$$\begin{array}{l}
R(\xi,e)\xi= -\big(2\varepsilon a_1+1\big)e -2a_1 \varphi e, \;
R(\xi, \varphi e)\xi= 2a_1 e +\big(2\varepsilon a_1-1 \big) \varphi e, \;
R(e, \varphi e)\xi= 0,
\end{array}$$
which easily yield 
\begin{equation}\label{rhokmu}
\varrho =\left(\begin{array}{ccc}
-2  & 0   & 0 \\[4pt]
0   & \frac{r}{2} +1-2\varepsilon a_1 & 2a_1 \\[4pt]
0   & 2a_1 & - \frac{r}{2} -1-2\varepsilon a_1
\end{array}\right).
\end{equation}
Replacing from \eqref{liexi} and \eqref{rhokmu} into \eqref{solit}, we then obtain the following result, which contains a complete characterization of three-dimensional paracontact Ricci solitons.}

\begin{theorem}\label{thkmu}
Let $(M,\varphi,\xi,\eta,g)$ denote a three-dimensional paracontact metric manifold. 

\smallskip
(a) If $(M,\varphi,\xi,\eta,g)$ is not paraSasakian and $\xi$ is an infinitesimal harmonic transformation, then $(M,\varphi,\xi,\eta,g)$ is a $(\kappa,\mu)$-space, with $\kappa=-1$ and $\mu=-\varepsilon A / a_1$. 

\smallskip
(b) In particular, $(M,\varphi,\xi,\eta,g)$ is a nontrivial paracontact Ricci soliton if and only if is a $(\kappa,\mu)$-space, with $\kappa=-1$ and $\mu=-2\varepsilon$, of scalar curvature $r=-6$.
\end{theorem}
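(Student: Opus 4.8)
The plan is to prove both parts by evaluating the relevant tensorial identities on the local $\varphi$-basis $\{\xi,e,\varphi e\}$ supplied by Proposition~\ref{xiIAT} and then matching coefficients. Since \eqref{kappamu} is tensorial and $h\xi=0$, it suffices to test it on the pairs $(\xi,e)$, $(\xi,\varphi e)$ and $(e,\varphi e)$. All the curvature and Ricci data I need are already tabulated, so the genuine content is organizational rather than computational.

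For part (a): because $\xi$ is an infinitesimal harmonic transformation, Proposition~\ref{xiIAT} gives $a_2=\varepsilon a_1$ (hence $h^2=0$) together with the three curvature identities displayed just before the theorem, namely $R(\xi,e)\xi=-(\varepsilon A+1)e-A\varphi e$, $R(\xi,\varphi e)\xi=Ae+(\varepsilon A-1)\varphi e$ and $R(e,\varphi e)\xi=0$. Feeding $he=a_1e+\varepsilon a_1\varphi e$ and $h\varphi e=-\varepsilon a_1 e-a_1\varphi e$ (from \eqref{acca} with $a_2=\varepsilon a_1$) into the right-hand side of \eqref{kappamu} and comparing the $e$- and $\varphi e$-components on the pair $(\xi,e)$ yields the two scalar equations $\kappa+\mu a_1=-\varepsilon A-1$ and $\varepsilon\mu a_1=-A$; the pair $(\xi,\varphi e)$ reproduces the same two equations and $(e,\varphi e)$ holds automatically. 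The not-paraSasakian hypothesis means $h\neq 0$, i.e. $a_1\neq0$ by Proposition~\ref{p5}(3), so the second equation solves to $\mu=-\varepsilon A/a_1$; substituting this into the first and using $\varepsilon^2=1$ collapses it to $\kappa=-1$.

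For part (b), forward direction: a paracontact Ricci soliton has $\xi$ satisfying \eqref{solit}, hence $\xi$ is an infinitesimal harmonic transformation by \cite{SS}, so part (a) applies, and Theorem~\ref{thRS} supplies the soliton constraints \eqref{RSpc}, in particular $A=2a_1$ and $r=-6$. The soliton is nontrivial exactly when $a_1\neq0$: if $a_1=0$, then \eqref{RSpc} and \eqref{newrhopc} force $\varrho=-2g$, an Einstein (hence trivial) metric. With $a_1\neq0$, inserting $A=2a_1$ into $\mu=-\varepsilon A/a_1$ gives $\mu=-2\varepsilon$, so the manifold is the asserted $(\kappa,\mu)$-space with $\kappa=-1$ and $r=-6$. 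For the converse, the hypothesis $\kappa=-1$ already forces $h^2=0$ by \cite{CKM}, Lemma~3.2, hence $a_2=\varepsilon a_1$, the Lie derivative $\mathcal L_\xi g$ is given by \eqref{liexi}, and the $(\kappa,\mu)$-condition with $\mu=-2\varepsilon$ produces the Ricci tensor \eqref{rhokmu}. It then remains to substitute \eqref{liexi} and \eqref{rhokmu} into \eqref{solit} and check it componentwise on $\{\xi,e,\varphi e\}$: the $(\xi,\xi)$-entry fixes $\lambda=-2$, the off-diagonal $(e,\varphi e)$-entry cancels identically, and the two diagonal entries both reduce to $r=-6$, which holds by hypothesis.

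The step I expect to require the most care is the nontriviality bookkeeping in (b): one must observe that prescribing a value for $\mu$ is vacuous unless $h\neq0$ (as $\mu$ is undetermined in the paraSasakian case), and conversely that the paraSasakian solitons ($a_1=0$) are precisely the trivial Einstein ones, so that ``nontrivial'' and ``$a_1\neq0$'' coincide; this equivalence is the hinge that makes the stated biconditional clean. A secondary point to watch is the consistency of the over-determined diagonal system in the converse, which closes only because the hypothesis $r=-6$ makes the $(e,e)$- and $(\varphi e,\varphi e)$-equations agree.
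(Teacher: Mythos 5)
Your proof is correct and takes essentially the same route as the paper: part (a) by matching the curvature identities against \eqref{kappamu} on the $\varphi$-basis $\{\xi,e,\varphi e\}$ (with not-paraSasakian giving $a_1\neq 0$), and part (b) by combining (a) with the soliton constraints \eqref{RSpc} in one direction and, conversely, invoking Lemma~3.2 of \cite{CKM} to get $h^2=0$ and then substituting \eqref{liexi} and \eqref{rhokmu} into \eqref{solit} componentwise. Your explicit bookkeeping that ``nontrivial'' $\Leftrightarrow$ ``$a_1\neq 0$'' $\Leftrightarrow$ ``not paraSasakian'' is a point the paper leaves implicit, and you handle it correctly.
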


The above Theorem~\ref{thkmu} answers the question stated in \cite{CPe}, concerning the existence of nontrivial paracontact Ricci solitons: in dimension three, they coincide with a remarkable class of paracontact $(\kappa,\mu)$-spaces. This result is a remarkable difference between the paracontact and the contact metric cases with regard to Ricci solitons. In fact, a {\em contact} metric Ricci soliton, either Riemannian or Lorentzian, is necessarily trivial, that is, Sasakian Einstein (see \cite{CP2} and references therein). Explicit examples of nontrivial paracontact Ricci solitons, both homogeneous and inhomogeneous, will be described in the next Section. 

We shall now specify the Segre type of the Ricci operator of a three-dimensional nontrivial paracontact Ricci soliton. Because of the symmetries of the curvature tensor, the Ricci tensor $\varrho$ is symmetric \cite{O'N}. Consequently, the {\em Ricci operator} $Q$, defined by $g(QX,Y)=\varrho(X,Y)$, is self-adjoint. In the Riemannian case, this fact ensures the existence of an orthonormal basis diagonalizing $Q$. However, in the Lorentzian case, four different cases can occur, known as {\em Segre types} (see \cite{O'N}, p.261, and for example \cite{Cjgp} for the three-dimensional case). With regard to 
nontrivial three-dimensional paracontact Ricci solitons, as classified in the above Theorem~\ref{thkmu}, with respect to $\{\xi,e,\varphi e\}$, the Ricci operator $Q$ is explicitly given by
$$
Q =\left(\begin{array}{ccc}
-2  & 0   & 0 \\[4pt]
0   & -2\varepsilon a_1-2 & 2a_1 \\[4pt]
0   & -2a_1 &  2\varepsilon a_1-2
\end{array}\right),
$$
which, by a standard calculation, yields to the following result.

\begin{corollary}
If $(M,\varphi,\xi,\eta,g)$ is a  three-dimensional nontrivial paracontact Ricci soliton, then the Ricci eigenvalues are all equal to $-2$, and the corresponding eigenspace is two-dimensional. Therefore, the Ricci operator is of degenerate Segre type $\{(2,1)\}$.
\end{corollary}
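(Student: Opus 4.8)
The plan is to reduce the whole statement to an explicit linear–algebra computation with the displayed matrix of $Q$, exploiting its block structure with respect to $\{\xi,e,\varphi e\}$. First I would record that nontriviality forces $a_1\neq 0$: indeed, by Proposition~\ref{p5}(3) together with $a_2=\varepsilon a_1$, one has $h=0$ exactly when $a_1=0$, and $h=0$ means the soliton is paraSasakian, hence trivial. So throughout I may assume $a_1\neq 0$.

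Next I would determine the eigenvalues. Since the first row and column of $Q$ are $(-2,0,0)$, the Reeb direction $\xi$ is an eigenvector with eigenvalue $-2$, and $Q$ splits as $[-2]$ on $\mathbb{R}\xi$ and a $2\times2$ block $B$ on $\mathrm{span}\{e,\varphi e\}$. A direct computation gives $\mathrm{tr}\,B=(-2\varepsilon a_1-2)+(2\varepsilon a_1-2)=-4$ and, using $\varepsilon^2=1$, $\det B=(-2\varepsilon a_1-2)(2\varepsilon a_1-2)-(2a_1)(-2a_1)=4$. Hence the characteristic polynomial of $B$ is $t^2+4t+4=(t+2)^2$, so $B$ has the double eigenvalue $-2$. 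Together with the $\xi$-eigenvalue this shows that $-2$ is the only eigenvalue of $Q$, with algebraic multiplicity three.

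Then I would compute the geometric multiplicity from $Q+2I$. This matrix has vanishing first row and column, and its lower $2\times 2$ block $\bigl(\begin{smallmatrix}-2\varepsilon a_1 & 2a_1\\ -2a_1 & 2\varepsilon a_1\end{smallmatrix}\bigr)$ has determinant $-4\varepsilon^2a_1^2+4a_1^2=0$ yet is nonzero because $a_1\neq 0$; thus $\mathrm{rank}(Q+2I)=1$ and $\dim\ker(Q+2I)=2$. This is exactly the assertion that the $(-2)$-eigenspace is two-dimensional.

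Finally I would read off the Segre type. Since the algebraic multiplicity of $-2$ is three while its geometric multiplicity is two, $Q$ is not diagonalizable, and its Jordan form must consist of a single $2\times 2$ block together with a $1\times 1$ block, both with eigenvalue $-2$. To identify the corresponding Lorentzian Segre type I would exhibit the eigenvector of the $2\times 2$ block: solving $(Q+2I)v=0$ on $\mathrm{span}\{e,\varphi e\}$ gives $v\propto e+\varepsilon\varphi e$, which is null, since $g(e+\varepsilon\varphi e,e+\varepsilon\varphi e)=g(e,e)+\varepsilon^2 g(\varphi e,\varphi e)=1-1=0$. Thus the defective eigendirection is lightlike, which is precisely the degenerate case; comparing with the list of three-dimensional Lorentzian Segre types (e.g. \cite{Cjgp}) yields the type $\{(2,1)\}$. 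The one genuinely non-routine point—and the one I would stress—is this last identification: it is the Lorentzian signature, through the null character of the repeated eigenvector, that both obstructs diagonalization and fixes the Segre type, so the computation must be read in the indefinite inner product rather than as ordinary real Jordan theory.
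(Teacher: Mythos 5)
Your proposal is correct and is exactly the ``standard calculation'' the paper invokes from the displayed matrix of $Q$: characteristic polynomial $(t+2)^3$, $\mathrm{rank}(Q+2I)=1$ when $a_1\neq 0$, hence a two-dimensional eigenspace and Segre type $\{(2,1)\}$. Your explicit remarks that nontriviality forces $a_1\neq 0$ and that the defective eigendirection $e+\varepsilon\varphi e$ is null are welcome details that the paper leaves implicit, but they do not constitute a different route.
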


\section{Nontrivial paracontact Ricci solitons}
\setcounter{equation}{0}

\subsection{Homogeneous examples}

As proved in \cite{CIll}, a (simply connected, complete) homogeneous paracontact metric three-manifold is isometric to a Lie group $G$ equipped with a left-invariant paracontact metric structure $(\varphi,\xi,\eta,g)$. 
Then, denoting by $\mathfrak{g}$ the Lie algebra of $G$, we have that $\xi \in \mathfrak{g}$, $\eta$  is a $1$-form over $\mathfrak{g}$ and Ker$\eta \subset \mathfrak{g}$. Moreover, starting from a $\varphi$-basis of tangent vectors at the base point of $G$, by left translations one builds a $\varphi$-basis $\{\xi, e, \varphi e \}$ of the Lie algebra $\mathfrak g$.

Suppose now that the left-invariant paracontact metric structure $(\varphi,\xi,\eta,g)$ is a nontrivial Ricci soliton. Then, with respect to the $\varphi$-basis $\{\xi, e, \varphi e \}$ of the Lie algebra $\mathfrak g$, equations \eqref{2h2},\eqref{3h2} and \eqref{RSpc} hold for some real constant $a_1 \neq 0,a_3,a_4,a_5$. The second equation in \eqref{RSpc} then implies $a_3=\varepsilon$, while the last equation in \eqref{3h2} yields $a_5=-\varepsilon a_4$.  Replacing in the expression of $B$, we find $B=2a_3=2\varepsilon$. So, by the last equation of \eqref{RSpc}, we must have 
$$-6=r=-2-2B=-2-4\varepsilon,$$
which yields $\varepsilon=1$. System \eqref{3h2} then reduces to $a_4=a_5=0$. Therefore, \eqref{2h2} now becomes
\begin{equation}\label{leftinv}
	\begin{array}{l}   [\xi,e]= -a_1 e +\left(2-a_1 \right) \varphi e, \quad  
	     [\xi,\varphi e]= \left(2+a_1 \right) e +a_1 \varphi e, \quad  
	     [e,\varphi e]=-2 \xi,
\end{array}\end{equation}
and the Reeb vector field of this paracontact metric structure satisfies \eqref{solit}.
 
The above Lie algebra \eqref{leftinv} is not solvable, as $[\mathfrak g ,\mathfrak g]=\mathfrak g$. Indeed, comparing \eqref{leftinv} with the classification of left-invariant paracontact metric structures obtained in \cite{CIll} (see also \cite{CPA}), we see that \eqref{leftinv} corresponds to the Lie algebra $\mathfrak{sl}(2,\mathbb R)$ of the universal covering of $SL(2,\mathbb R)$. In this way, we proved the following result.

\begin{theorem} \label{teomo}
A homogeneous paracontact metric three-manifold $(M,\varphi,\xi,\eta,g)$ is a nontrivial paracontact Ricci soliton if and only if $M$ is locally isometric to $SL(2,\mathbb R)$, equipped with the left-invariant paracontact metric structure described in \eqref{leftinv}.  
\end{theorem}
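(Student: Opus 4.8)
The plan is to reduce the homogeneous problem to a purely algebraic one on a Lie algebra and then to read off the bracket relations from the general characterization of Theorem~\ref{thRS}. First I would invoke the structure result of \cite{CIll}: a (simply connected, complete) homogeneous paracontact metric three-manifold is a Lie group $G$ carrying a left-invariant paracontact metric structure, and left-translating a $\varphi$-basis at the identity yields a left-invariant $\varphi$-basis $\{\xi,e,\varphi e\}$ of $\mathfrak{g}$. The decisive simplification is that, in this setting, all the structure functions $a_1,\dots,a_5$ of \eqref{2h2}--\eqref{RSpc} are constants, so that every term of the form $\xi(\cdot)$, $e(\cdot)$ or $\varphi e(\cdot)$ vanishes.

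With this in hand I would feed the constancy of the $a_i$ into the system \eqref{2h2}, \eqref{3h2}, \eqref{RSpc} supplied by Theorem~\ref{thRS}. The second equation of \eqref{RSpc} becomes $2\varepsilon a_1 a_3=2a_1$, whence $a_3=\varepsilon$ because $a_1\neq 0$; the third equation of \eqref{3h2} becomes $2\varepsilon a_1 a_4+2a_1 a_5=0$, whence $a_5=-\varepsilon a_4$. Substituting into the expression for $B$ in \eqref{rhopc} gives $B=2a_3=2\varepsilon$, while \eqref{newrhopc} records $B=-\tfrac{r}{2}-1$; comparing with the normalization $r=-6$ of \eqref{RSpc} forces $\varepsilon=1$. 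Back-substituting $\varepsilon=1$ and $a_3=1$ into the first two equations of \eqref{3h2} then collapses them to $a_4=a_5=0$, and at this point the brackets \eqref{2h2} specialize exactly to \eqref{leftinv}, with $a_1\neq 0$ the only surviving parameter.

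It then remains to identify the algebra and to settle the converse. Since \eqref{leftinv} yields $[\mathfrak{g},\mathfrak{g}]=\mathfrak{g}$, the algebra is not solvable, and matching \eqref{leftinv} against the classification of left-invariant paracontact metric structures in \cite{CIll} pins it down as $\mathfrak{sl}(2,\mathbb{R})$, so that $M$ is locally isometric to $SL(2,\mathbb{R})$. For the converse I would simply verify that the constant data $a_1\neq 0$, $a_3=1$, $a_4=a_5=0$, $\varepsilon=1$ satisfy all of \eqref{2h2}, \eqref{3h2} and \eqref{RSpc}, so that Theorem~\ref{thRS} applies and produces a paracontact Ricci soliton; nontriviality is encoded precisely in $a_1\neq 0$, which gives $h\neq 0$ and hence a non-Einstein structure.

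I expect the only genuinely delicate point to be the sign bookkeeping that forces $\varepsilon=1$, since one must simultaneously track how the time-orientation of the $\varphi$-basis enters $B$, the scalar curvature $r$, and the soliton normalization: it is exactly the incompatibility of $\varepsilon=-1$ with $r=-6$ that rules out one of the two a priori admissible cases. The remaining difficulty is bibliographic rather than computational, namely matching \eqref{leftinv} to the correct isomorphism type in \cite{CIll} under the present curvature convention.
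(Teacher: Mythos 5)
Your proposal is correct and follows essentially the same route as the paper's own proof: reduction to a Lie group with a left-invariant structure via \cite{CIll}, constancy of the structure functions $a_1,\dots,a_5$, the deductions $a_3=\varepsilon$ and $a_5=-\varepsilon a_4$, hence $B=2\varepsilon$, the exclusion of $\varepsilon=-1$ by comparison with $r=-6$, the collapse of \eqref{3h2} to $a_4=a_5=0$, and the identification of \eqref{leftinv} with $\mathfrak{sl}(2,\mathbb R)$ through the classification in \cite{CIll}. No gaps to report.
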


\subsection{Explicit paracontact metric structures and some inhomogeneous examples}

We begin with the following result.

\begin{proposition} \label{tb}
Any three-dimensional paracontact metric structure $(\varphi, \xi, \eta, g)$, in terms of local Darboux cordinates $(x,y,z)$, is explicitly described by 
$$	\xi=2 \partial_z, \quad \eta=\frac{1}{2}(dz-ydx), $$
\begin{equation*}  
	g=\frac{1}{4} \left(
\begin{array}{ccc}
	a & b & -y  \\[4pt]
	b & c & 0   \\[4pt]
	-y& 0 & 1 
\end{array}
	\right),
\qquad 
\varphi=\left(
\begin{array}{ccc}
	-b      & -c  & 0  \\[4pt]
	(a-y^2) &  b  & 0   \\[4pt]
	-by     & -cy & 0 
\end{array}
	\right)
\end{equation*}
%
for some smooth functions $a,b,c$, satisfying $ac-b^2-cy^2=-1$. In particular,

\smallskip\noindent
i) the structure is paraSasakian if and only if the functions $a,b,c$ do not depend on $z$, 

\smallskip\noindent
ii) $h^2=0$ (equivalently, ${\rm tr} h^2=0$) if and only if $b_z^2-a_zc_z=0$.

\end{proposition}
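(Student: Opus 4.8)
The plan is to normalize the structure tensor by tensor, using at the outset that for a paracontact metric manifold $\eta$ is a contact form (Proposition~\ref{p5}(1), since there $a_1-b_2=1\neq0$) and that $\xi$ is its Reeb field (it lies in $\ker d\eta$ by Proposition~\ref{first}(i), and $\eta(\xi)=1$). First I would invoke the contact--form version of Darboux's theorem to pick local coordinates $(x,y,z)$ in which $\eta=\tfrac12(dz-y\,dx)$; the constant $\tfrac12$ reflects the normalization convention for $d$ used throughout the paper (the same one giving $d\eta(e,\varphi e)=-\tfrac12\eta[e,\varphi e]$ in the proof of Proposition~\ref{p5}). With $\eta$ in this form, $\xi$ is forced: $\iota_\xi d\eta=0$ kills the $\partial_x,\partial_y$ components of $\xi$, and $\eta(\xi)=1$ then gives $\xi=2\partial_z$.

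Next I would read off $g$. Since $g$ is compatible, $\eta(\cdot)=g(\xi,\cdot)$ (a consequence of \eqref{almostpc}--\eqref{compg}), and evaluating this identity on $\partial_x,\partial_y,\partial_z$ determines the three components $g_{xz}=-\tfrac y4$, $g_{yz}=0$, $g_{zz}=\tfrac14$. The remaining components $g_{xx},g_{xy},g_{yy}$ are unconstrained; writing them as $\tfrac a4,\tfrac b4,\tfrac c4$ reproduces the displayed matrix for $g$.

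The tensor $\varphi$ is then not free. Because $\Phi=d\eta$ for a paracontact metric structure and $g$ is nondegenerate, $\varphi$ is the unique $(1,1)$-tensor with $g(X,\varphi Y)=d\eta(X,Y)$, i.e.\ $\varphi=g^{-1}[\,d\eta\,]$ in matrix form; a direct computation gives $\varphi=\tfrac{1}{\det(4g)}P$ for an explicit matrix $P$ in $a,b,c,y$, where $\det(4g)=ac-b^2-cy^2$. I would then impose the last axiom $\varphi^2=\mathrm{Id}-\eta\otimes\xi$ from \eqref{almostpc}: computing $P^2$ yields $P^2=-\det(4g)\,(\mathrm{Id}-\eta\otimes\xi)$, so $\varphi^2=-\tfrac{1}{\det(4g)}(\mathrm{Id}-\eta\otimes\xi)$, and the axiom holds precisely when $\det(4g)=ac-b^2-cy^2=-1$. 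Substituting this back turns $\tfrac{1}{\det(4g)}P$ into the displayed $\varphi$. The other axioms come for free: $\Phi=d\eta$ being a genuine $2$-form makes $\varphi$ skew with respect to $g$, whence the compatibility \eqref{compg} is a formal consequence of $\varphi^2=\mathrm{Id}-\eta\otimes\xi$ and $g(\xi,\cdot)=\eta$; the sign $\det(4g)=-1<0$ together with $g(\xi,\xi)=1$ pins the signature to $(2,1)$; and $\varphi^2=\mathrm{Id}$ on $\ker\eta$ with $\mathrm{tr}(\varphi|_{\ker\eta})=0$ makes $\varphi|_{\ker\eta}$ paracomplex. I expect the main obstacle to be exactly this normalization bookkeeping: arranging the two convention-dependent constants (the $\tfrac12$ in $\eta$ and the $\tfrac14$ in $g$) so that the structural scalar comes out to $-1$ rather than $-2$, and checking that the single equation $\det(4g)=-1$ simultaneously encodes $\varphi^2=\mathrm{Id}-\eta\otimes\xi$, the compatibility \eqref{compg}, and the Lorentzian signature. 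Running the computation in reverse then confirms that every triple $(a,b,c)$ with $ac-b^2-cy^2=-1$ yields a paracontact metric structure, so the description is exhaustive.

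Finally, for (i) and (ii) I would compute $h=\tfrac12\mathcal L_\xi\varphi$ directly. Since $\xi=2\partial_z$ has constant coefficients, $\mathcal L_\xi\varphi=2\,\partial_z\varphi$, so $h$ is represented by the matrix $\partial_z\varphi$, whose nonzero entries involve only $a_z,b_z,c_z$. For (i), a three-dimensional paracontact metric structure is paraSasakian exactly when it is normal, that is, when $h=0$ (the equivalence recalled before Proposition~\ref{first}); and $\partial_z\varphi=0$ holds precisely when $a,b,c$ are independent of $z$. For (ii), the one-line computation $\mathrm{tr}\,h^2=2(b_z^2-a_zc_z)$, combined with the equivalence $\mathrm{tr}\,h^2=0\Leftrightarrow h^2=0$ recorded after \eqref{h^2}, shows that $h^2=0$ if and only if $b_z^2-a_zc_z=0$.
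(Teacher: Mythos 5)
Your proposal is correct and follows essentially the same route as the paper's proof: Darboux coordinates giving $\eta=\frac12(dz-y\,dx)$ and $\xi=2\partial_z$, the compatibility $g(\xi,\cdot)=\eta$ fixing the last row and column of $g$, a determinant computation producing the constraint $ac-b^2-cy^2=-1$, and the observation that $h=\frac12\mathcal{L}_\xi\varphi$ is represented by $\partial_z\varphi$ for parts (i)--(ii). The only organizational difference is that you obtain the constraint by solving $\varphi=g^{-1}[d\eta]$ and then imposing $\varphi^2=\mathrm{Id}-\eta\otimes\xi$, whereas the paper writes down $\varphi$ and imposes the associated-metric condition $g(\cdot,\varphi\,\cdot)=d\eta$; these are equivalent bookkeepings of the same linear algebra, yours having the small extra merit of making the uniqueness of $\varphi$ explicit.
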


\begin{proof}
By the classic Theorem of Darboux (see \cite{B}, p.24)  a contact form $\eta$ and {its} Reeb vector field $\xi$ are explicitly given by $\eta=\frac{1}{2}(dz-ydx)$ and $\xi=2 \partial_z$ in terms of Darboux coordinates $(x,y,z)$.
 
Next, compatibility condition $g(\xi,X)=\eta(X)$ yields the form of the last row and column of the matrix representing $g$ with respect to $\{{\partial_x},{\partial_y},{\partial_z}\}$. We then observe that, with respect to this basis, one has
\begin{equation*}
d\eta=\frac 14 \left(
\begin{array}{ccc}
	0    & 1  & 0  \\[4pt]
	-1 &  0   & 0   \\[4pt]
	0    &  0   & 0 
\end{array}
	\right),
\qquad   
	g(\cdot,\varphi)=-\frac{ac-b^2-cy^2}{4} \left(
\begin{array}{ccc}
	0  & 1 & 0  \\[4pt]
	-1 & 0 & 0   \\[4pt]
	0  & 0 & 0 
\end{array}
	\right)
\end{equation*}
So, $g$ is an associated metric, that is, $d\eta=g(\cdot,\varphi)$, if and only if $ac-b^2-cy^2=-1$.

\noindent
Finally, with respect to $\{{\partial_x},{\partial_y},{\partial_z}\}$, tensors $h=\frac 12 \mathcal L_{\xi} \varphi$  and $h^2$ are explicitly given by
\begin{equation}  \label{hmatrix}
	h=\left(
\begin{array}{ccc}
	-b_z  & -c_z  & 0  \\[4pt]
	a_z   & b_z   & 0   \\[4pt]
	-b_z y& -c_zy & 0 
\end{array}
	\right),
\qquad 
h^2=\left(
\begin{array}{ccc}
	 b_z^2-a_z c_z     & 0              & 0  \\[4pt]
	  0                & b_z^2-a_z c_z  & 0   \\[4pt]
	 (b_z^2-a_z c_z)y  & 0              & 0 
\end{array}
	\right).
\end{equation}
A three-dimensional paracontact metric manifold is paraSasakian if and only if $h=0$ \cite[Theorem~2.2]{CIll}. So, by \eqref{hmatrix}, the structure is paraSasakian if and only if $a_z=b_z=c_z=0$, while $h^2=0$ if and only if $b_z^2-a_zc_z=0$.
\end{proof}

\noindent
The above result emphasizes the fact that differently from the contact metric case,  for paracontact metric structures condition $h^2=0$ does not imply $h=0$.

\bigskip \noindent
{\bf Example.} We consider $M=\mathbb R^3(x,y,z)$, equipped with the paracontact metric structure $(\varphi, \xi, \eta, g)$ defined by
\begin{eqnarray} \label{xieta}
		\xi=2 \partial_z, \quad \eta=\frac{1}{2}(dz-ydx),
\end{eqnarray}
%
%
\begin{equation}  \label{gvarphi}
	g=\frac{1}{4} \left(
\begin{array}{ccc}
	F & 1 & -y  \\[4pt]
	1 & 0 & 0   \\[4pt]
	-y& 0 & 1 
\end{array}
	\right),
\qquad 
\varphi=\left(
\begin{array}{ccc}
	-1     & 0  & 0  \\[4pt]
	F-y^2 & 1  & 0   \\[4pt]
	-y     & 0  & 0 
\end{array}
	\right),
\end{equation}
where 
$$F=F(x,y,z)=f(x)+\alpha e^{2z} + \beta y+ \gamma ,$$ 
for a  smooth function $f(x)$ and some real constant $\alpha \neq 0, \beta, \gamma$. We now prove the following result.


\begin{theorem}\label{nothomRS}
Let $(\varphi, \xi,\eta, g)$ be the paracontact metric structure described by \eqref{xieta} and \eqref{gvarphi}. Then, $(\mathbb R^3, \varphi, \xi,\eta, g)$ is a paracontact Ricci soliton. Moreover, for any $\beta\neq 0$, such a paracontact metric structure is not locally homogeneous.
\end{theorem}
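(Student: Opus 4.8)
The plan is to route the soliton claim through the structure theory of Section~3 rather than computing the curvature of $g$ by brute force. Matching \eqref{gvarphi} with Proposition~\ref{tb} identifies this as the case $a=F$, $b=1$, $c=0$, so the constraint $ac-b^2-cy^2=-1$ is automatic and, since $b_z=c_z=0$, Proposition~\ref{tb}(ii) gives $h^2=0$ at no cost. First I would set up a $\varphi$-basis from the two null eigenlines of $\varphi$ inside $\ker\eta$: its $(+1)$- and $(-1)$-eigendirections are $u_+=\partial_y$ and $u_-=\partial_x+y\partial_z-\tfrac12(F-y^2)\partial_y$, with $g(u_+,u_-)=\tfrac14$, so that $e=\sqrt2\,(u_++u_-)$ and $\varphi e=\sqrt2\,(u_+-u_-)$ form a pseudo-orthonormal $\varphi$-basis with $\varphi e$ time-like. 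Computing the brackets $[\xi,e]$, $[\xi,\varphi e]$, $[e,\varphi e]$ and comparing with \eqref{2h2}, the only inputs being $\xi(F)=2\alpha e^{2z}$ and $\partial_yF=\beta$, should yield $\varepsilon=1$, $a_1=\alpha e^{2z}$, $a_3=-1$, $a_4=(\beta-2y)/\sqrt2$ and $a_5=-a_4$.

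The remaining verification is then purely algebraic. The first two equations of \eqref{3h2} are the Jacobi identity for honest vector fields and hold automatically, while the third (the harmonic-transformation condition) holds because $(e+\varphi e)(a_1)=2\sqrt2\,\partial_y a_1=0$ and $a_5=-a_4$; thus $\xi$ is an infinitesimal harmonic transformation. For \eqref{RSpc} one checks $A=\xi(a_1)+2\varepsilon a_1a_3=4a_1-2a_1=2a_1$, and, from the definition of $B$ in \eqref{rhopc}, $B=e(a_5)-\varphi e(a_4)+2a_3-a_4^2+a_5^2=-(e+\varphi e)(a_4)-2=2$, whence $r=-2B-2=-6$. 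By Theorem~\ref{thRS} this proves that $(\mathbb R^3,\varphi,\xi,\eta,g)$ is a paracontact Ricci soliton with $\lambda=-2$, hence expanding; equivalently one reads off $\mathcal L_\xi g+\varrho=-2g$ from \eqref{liexi} and \eqref{newrhopc}.

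The hard part is the inhomogeneity, and the obstacle is that the natural invariants are blind to it. If the structure were locally homogeneous then $g$ would be homogeneous and every scalar curvature invariant would be constant; but by Theorem~\ref{thkmu} the soliton is a $(\kappa,\mu)$-space with $\kappa=-1$, $\mu=-2$, so $Q$ has the single eigenvalue $-2$ (Segre type $\{(2,1)\}$), every polynomial invariant of $Q$ is constant, and in fact a short computation gives $\|\nabla\varrho\|^2=0$. No metric invariant of low order will therefore see the parameter $\beta$, so one must manufacture an invariant from the Reeb data.

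The resolution is to note that $\xi$ and $\varrho$ together single out both null lines of $\ker\eta$: $\mathrm{span}(u_+)=\mathrm{Im}(Q+2\,\mathrm{id})$, and $\mathrm{span}(u_-)$ is the other null line of $g$ in $\xi^\perp=\ker\eta$. Any expression homogeneous of degree $0$ in $u_-$ is then a well-defined function on $M$, invariant under every isometry preserving the paracontact structure. Computing $\nabla\varrho$ from \eqref{nablapc} and \eqref{newrhopc} one finds $(\varrho+2g)(u_-,u_-)=-a_1$ and $(\nabla\varrho)(u_-,u_-,u_-)=-a_1\beta$, so that
\[
\Omega:=\frac{\big[(\nabla\varrho)(u_-,u_-,u_-)\big]^2}{\big[(\varrho+2g)(u_-,u_-)\big]^3}=-\frac{\beta^2}{a_1}=-\frac{\beta^2}{\alpha e^{2z}}
\]
is such an invariant. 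For $\beta\neq0$ it is non-constant, so the structure cannot be locally homogeneous; for $\beta=0$ it vanishes, in agreement with local isometry to the $SL(2,\mathbb R)$ model of Theorem~\ref{teomo}. The one point requiring care is that every component of $\nabla\varrho$ carries the null weight of $u_+$---which is exactly why $\|\nabla\varrho\|^2$ vanishes---so that the triple $u_-$-contraction in $\Omega$ is the smallest object capable of detecting $\beta$.
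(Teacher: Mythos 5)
Your proof is correct, and it splits naturally into a half that mirrors the paper and a half that does not. The soliton verification is essentially the paper's own argument: the paper likewise builds a global $\varphi$-basis from the null directions of $\ker\eta$ and matches the brackets against \eqref{2h2}, \eqref{3h2}, \eqref{RSpc} to invoke Theorem~\ref{thRS}; your basis differs from theirs by a constant hyperbolic rotation of $\ker\eta$, which is exactly why you get $a_1=\alpha e^{2z}$, $a_4=(\beta-2y)/\sqrt2$ where the paper gets $a_1=4\alpha e^{2z}$, $a_4=\sqrt2(\beta-2y)$; both are legitimate since the soliton property is frame-independent, and I checked your values do satisfy \eqref{3h2} and \eqref{RSpc}. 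One small imprecision: the first two equations of \eqref{3h2} are not literally the Jacobi identities \eqref{3pc}; they are \eqref{3pc} (with $a_2=\varepsilon a_1$) rewritten modulo the third equation, so they ``hold automatically'' only once the third is verified --- which you do, so the logic closes.

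The inhomogeneity half is a genuinely different route. The paper assumes local homogeneity, invokes the classification of homogeneous paracontact metric three-manifolds of \cite{CIll} to reduce to a Lie group with left-invariant structure, writes the putative invariant frame as a pointwise boost $(p,q)$ of $(E,\varphi E)$, and shows constancy of the structure constants forces $\beta=0$ (supplemented by a \emph{Maple} check for homogeneity of the bare metric). You instead manufacture a scalar invariant: $\mathrm{span}(u_+)=\mathrm{Im}(Q+2\,\mathrm{id})$ is well defined because $a_1=\alpha e^{2z}\neq0$ everywhere, $\mathrm{span}(u_-)$ is the other null line of $\ker\eta$ (this needs $\xi$, hence the paracontact structure, to be preserved --- which is exactly what local homogeneity of the \emph{structure} gives), and the degree-zero ratio $\Omega$ must then be constant under a transitive pseudogroup of automorphisms. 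I verified the two key numbers, $(\varrho+2g)(u_-,u_-)=-a_1$ and $(\nabla\varrho)(u_-,u_-,u_-)=-a_1\beta$, using $\nabla_{u_-}u_-=-\tfrac{a_1}{2}\xi-\tfrac{\sqrt2\,a_4}{2}u_-$ and $u_-(a_1)=2ya_1$; they are right, so $\Omega=-\beta^2/(\alpha e^{2z})$ is non-constant for $\beta\neq0$ and the conclusion follows. What your route buys: it is self-contained (no appeal to \cite{CIll}, no ODE analysis for $(p,q)$, no computer check) and produces an explicit local obstruction to homogeneity. What the paper's route buys: it exhibits the $\beta=0$ model as $SL(2,\mathbb R)$ within the same computation, and its Maple remark addresses homogeneity of $g$ alone, which your $\Omega$ cannot see (since defining $u_-$ uses $\xi$); neither point is needed for the statement as written.
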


\begin{proof} 
The paracontact metric structure defined by \eqref{xieta} and \eqref{gvarphi} is of the type described in Proposition~\ref{tb}, with $a=F$, $b=1$ and $c=0$. This structure is not paraSasakian, because $a_z=F_z =2\alpha e^{2z}\neq 0$. On the other hand, since $b,c$ are constant, one concludes at once that $b_z^2-a_zc_z=0$. Therefore, $h^2=0 \neq h$. 

We now determine a global $\varphi$- basis $(\xi, E, \varphi E)$ on $M$, taking
$$E:=\frac{1}{\sqrt{2}} \big(4 \partial_x + (2y^2 - 2F +1) \partial_y +4y\partial_z \big)$$
and so,
$$\varphi E=\frac{1}{\sqrt{2}} \big(-4 \partial_x + (1-2y^2+ 2F ) \partial_y -4y\partial_z \big).$$ 
By a standard calculation, we then get
\begin{equation} \label{lb} 
\left\{
 \begin{array}{l}
    [\xi,E]=- 4 \alpha e^{2z}(E+\varphi E),  \\[4pt]
     [\xi,\varphi E]=4 \alpha e^{2z}(E+\varphi E), \\[4pt]
 [E,\varphi E]=-2\xi+\sqrt{2}(\beta-2y)(E+\varphi E). 
 \end{array}	
\right.
\end{equation}

\noindent
We can now compare \eqref{lb} with \eqref{2pc}, obtaining $a_1=a_2=4\alpha e^{2z}$, $a_3=-1$ and $a_4=-a_5=\sqrt{2}(\beta-2y)$. It is then easy to check that the conditions in \eqref{3h2} and \eqref{RSpc} are satisfied. Hence, by Theorem~\ref{thRS}, we conclude that $(M,\varphi, \xi, \eta, g)$ is a paracontact Ricci soliton.

\smallskip
Suppose now that $(\mathbb R^3, \varphi, \xi,\eta, g)$ is a locally homogeneous paracontact metric manifold.  Then, by \cite{CIll} (see also \cite{CPA}), this manifold is locally isometric to some Lie group, equipped with a left-invariant paracontact metric structure.

\noindent
In particular, then there exist (at least, locally) a left-invariant orthonormal $\varphi$-basis $\{\xi, e, \varphi e \}$ of the corresponding Lie algebra, that is, a new basis $\{e, \varphi e \}$ of ker$\eta$, such that
\begin{equation*}  
    [\xi,e]=\lambda_1 \xi+\lambda_2 e+\lambda_3 \varphi e,  \quad
     [\xi,\varphi e]=\mu_1 \xi+\mu_2 e+\mu_3 \varphi e,  \quad 
		[e,\varphi e]=\sigma_1 \xi+\sigma_2 e+\sigma_3 \varphi e, 
\end{equation*}
for some real constants $\lambda_i, \mu_i, \sigma_i$, $i=1,2,3$. Hence, there must exist two smooth functions $p=p(x,y,z)$ and $q=q(x,y,z)$, such that $p^2-q^2=1$ and
$$
	e=p E+ q\varphi E, \qquad \varphi e= q E+p \varphi e.
$$
Conversely, we then get
$$
E=p e-q\varphi e {\quad \rm and \quad} \varphi E= -q e+ p \varphi e.
$$
Therefore, a standard calculation yields 
\begin{equation*}  
\left\{
 \begin{array}{l}
[\xi,e]=\big(2pp_z - 2qq_z - 4\alpha e^{2z} (p-q)^2 \big) e+\big(2pq_z - 2qp_z - 4\alpha e^{2z} (p-q)^2 \big) \varphi e,  \\[4pt]
[\xi,\varphi e]=\big(2pq_z - 2qp_z + 4\alpha e^{2z} (p-q)^2 \big) e+\big(2pp_z - 2qq_z + 4\alpha e^{2z} (p-q)^2 \big) \varphi e.
 \end{array}	
\right.
\end{equation*}
Note that $p\neq q$, as $p^2-q^2=1$. Requiring that the coefficients in the above Lie brackets are costant, we then  easily find
\begin{eqnarray}
	p=\frac{1}{2}\Big(\frac{e^z}{C}+\frac{C}{e^z}\Big) \quad \text{and} \quad	q=\frac{1}{2}\Big(\frac{e^z}{C}-\frac{C}{e^z}\Big),
\end{eqnarray}
for some real costant $C\neq 0$. Consequently, we have

\begin{equation*}  
\left\{
 \begin{array}{l}
[\xi,e]=-(4\alpha C^{2}) e+\big(2 - 4\alpha C^{2}) \varphi e,  \\[4pt]
[\xi,\varphi e]=(2 + 4\alpha C^{2} ) e+ 4\alpha C^{2} \varphi e,   \\[4pt]
[e,\varphi e]= -2\xi + \sqrt{2}\beta C e^{-z} (e+ \varphi e),
 \end{array}	
\right.
\end{equation*}

\noindent
where $\alpha, C \neq 0$ and $\beta$ are real constants. Therefore, $(\mathbb R^3, \varphi, \xi,\eta, g)$ is locally homogeneous if and only if $\beta=0$. It is easily seen, from the Lie brackets above, that for $\beta=0$ we get exactly the Lie group $SL(2,\mathbb R)$ of Theorem~\ref{teomo}. On the other hand, whenever $\beta \neq 0$, we described a paracontact Ricci soliton which is not locally homogeneous. We also checked using {\em Maple16$^\copyright$} that homogeneous Lorentzian structures \cite{Cjgp} exist for the metric $g$ if and only if $\beta=0$, so confirming that when $\beta \neq 0$, this example is indeed not locally homogeneous.

\end{proof}

Contact $(\kappa,\mu)$-spaces (with $\kappa,\mu$ constant) have been completely classified in \cite{Bo}, showing that the non-Sasakian ones are isometric to some Lie groups equipped with a left-invariant contact metric structure. On the other hand, as a consequence of the above Theorems~\ref{thkmu}  and \ref{nothomRS}, there exist three-dimensional paracontact $(\kappa,\mu)$-spaces, which are neither paraSasakian nor isometric to Lie groups.

\section{3D Normal almost paracontact Ricci solitons}
\setcounter{equation}{0}

We now consider normal almost paracontact metric three-manifolds, obtaining for them a description similar to the one given in Section~3 for the paracontact metric case. As we already mentioned in Section~2, normal almost paracontact metric manifolds satisfy conditions listed in Proposition~\ref{first}. Consequently, they are locally described by equations~\eqref{2},\eqref{3}, for some smooth functions, $a_i,b_j$, with respect to a local $\varphi$-basis $\{\xi,e,\varphi e\}$. Moreover, a three-dimensional almost paracontact metric manifold is normal if and only if $h=0$ \cite{CPA}. Thus, by \eqref{acca} we have at once that $a_1=a_2=0$ and so, the Levi-Civita connection of a normal almost paracontact metric three-manifold is described by \eqref{nabla} taking $a_1=a_2=0$. Observe that from \eqref{trphi} and \eqref{divxi} we now have that $b_1=\frac{1}{2} {\rm div} \xi$ and $b_2 = \frac{1}{2} {\rm tr} \varphi \nabla \xi$ are globally defined on $M$ (see also \cite{BeCr}).

We can now follow the same argument used in Section~3 for paracontact metric manifolds, to describe the cases when $\xi$ is an infinitesimal harmonic transformation and, in particular, when $\xi$ determines a solution to the Ricci soliton equation~\eqref{solit}. Also taking into account \eqref{3} (with $a_1=a_2=0$), we find
\begin{align*}
&(\mathcal L_{\xi} \nabla)(\xi,\xi)=0, \\[2pt]	   
&(\mathcal L_{\xi} \nabla)(e,e)=-\big(\xi(b_1)+2b_1^2\big)\xi+e(b_1)e +\varphi e(b_1) \varphi e, \\[2pt]	   
&(\mathcal L_{\xi} \nabla)(\varphi e ,\varphi e)=\big(\xi(b_1)+2b_1^2\big)\xi+e(b_1) e+ \varphi e(b_1) \varphi e 
\end{align*}
and so,
\begin{align*}
{\rm tr}(\mathcal L_{\xi} \nabla)=& (\mathcal L_{\xi} \nabla)(\xi,\xi)+(\mathcal L_{\xi} \nabla)(e ,e)-(\mathcal L_{\xi} \nabla)(\varphi e ,\varphi e) =-2\big(\xi(b_1)+2b_1^2\big)\xi . 
\end{align*}
Hence, we have the following result.

\begin{proposition}
A three-dimensional normal almost paracontact metric manifold $(M,\varphi,\xi,\eta,g)$ is locally described by 
\begin{equation} \label{2nor}
    [\xi,e]=-b_1 e+(a_3-b_2)\varphi e,  \;
     [\xi,\varphi e]=(a_3-b_2)e-b_1\varphi e, \;
     [e,\varphi e]=2b_2 \xi+a_4e-a_5\varphi e,
\end{equation}
with respect to a local $\varphi$-basis $\{\xi, e, \varphi e\}$, for some smooth functions $b_1,b_2,a_3,a_4,a_5$, satisfying 
\begin{eqnarray} \label{3nor}
\left\{ \begin{array}{l}
				\xi(b_2)+2b_1 b_2=0,	\\[2pt]
				-\xi(a_4)+e(a_3-b_2) + \varphi e(b_1) -b_1a_4+a_5(a_3-b_2)=0,	\\[2pt]
				\xi(a_5)-e(b_1) - \varphi e(a_3-b_2) +b_1a_5-a_4(a_3-b_2)=0.
\end{array}
\right.
\end{eqnarray}
In particular, the Reeb vector field $\xi$ is an infinitesimal harmonic transformation if and only if 
\begin{equation}\label{IHTnor}
\xi(b_1)+2b_1^2=0.  
\end{equation}
\end{proposition}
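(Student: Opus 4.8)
The plan is to mimic exactly the computation already carried out for the paracontact metric case in Section~3, now specialized to the normal setting where Proposition~\ref{first} applies and $h=0$, i.e.\ $a_1=a_2=0$. First I would record the Levi-Civita connection: since a normal almost paracontact metric three-manifold satisfies the conditions of Proposition~\ref{first}, it is governed by \eqref{nabla} with $a_1=a_2=0$, so that $\nabla_e\xi=b_1e+b_2\varphi e$, $\nabla_{\varphi e}\xi=b_2e+b_1\varphi e$, $\nabla_\xi e=a_3\varphi e$, $\nabla_\xi\varphi e=a_3 e$, together with the remaining six formulas. Translating these into Lie brackets via $[X,Y]=\nabla_XY-\nabla_YX$ yields \eqref{2nor}, and substituting $a_1=a_2=0$ directly into the Jacobi-identity system \eqref{3} produces the three equations in \eqref{3nor}; the first of these is just $\xi(b_2-a_1)-2(a_2-b_1)(b_2-a_1)=0$ evaluated at $a_1=a_2=0$, giving $\xi(b_2)+2b_1b_2=0$.

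Next I would compute $\operatorname{tr}(\mathcal L_\xi\nabla)$ from the defining identity $(\mathcal L_Z\nabla)(X,Y)=\mathcal L_Z\nabla_XY-\nabla_{\mathcal L_ZX}Y-\nabla_X\mathcal L_ZY$, exactly as in the paracontact case but now with the simplified brackets \eqref{2nor}. The trace is $(\mathcal L_\xi\nabla)(\xi,\xi)+(\mathcal L_\xi\nabla)(e,e)-(\mathcal L_\xi\nabla)(\varphi e,\varphi e)$, and one checks that the $e$- and $\varphi e$-components of the two nonzero terms are identical and therefore cancel in the signed combination, leaving only the $\xi$-component. This yields $\operatorname{tr}(\mathcal L_\xi\nabla)=-2\bigl(\xi(b_1)+2b_1^2\bigr)\xi$. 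The infinitesimal-harmonic-transformation condition $\operatorname{tr}(\mathcal L_\xi\nabla)=0$ is then equivalent to \eqref{IHTnor}, namely $\xi(b_1)+2b_1^2=0$.

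Since the intermediate expressions $(\mathcal L_\xi\nabla)(e,e)$ and $(\mathcal L_\xi\nabla)(\varphi e,\varphi e)$ are displayed in the excerpt just before the statement, the proof reduces to confirming those formulas and reading off the trace; most of the work is the bookkeeping in applying $\mathcal L_\xi$ to the connection, which parallels the earlier paracontact computation. The main obstacle I anticipate is purely computational: keeping careful track of signs and of the derivative terms $\xi(b_1)$, $e(b_1)$, $\varphi e(b_1)$ when expanding $(\mathcal L_\xi\nabla)(e,e)$ and $(\mathcal L_\xi\nabla)(\varphi e,\varphi e)$, and verifying that the horizontal components genuinely cancel in the signed trace so that only the $\xi$-direction survives. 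No conceptual difficulty arises; the result follows by direct substitution and the same argument structure already used to establish Proposition~\ref{xiIAT}.
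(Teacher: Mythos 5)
Your proposal is correct and follows essentially the same route as the paper: specialize the general description \eqref{2}, \eqref{3} and the connection \eqref{nabla} to $a_1=a_2=0$ (normality $\Leftrightarrow h=0$ by \eqref{acca}), read off \eqref{2nor}--\eqref{3nor}, then compute ${\rm tr}(\mathcal L_\xi\nabla)=(\mathcal L_\xi\nabla)(\xi,\xi)+(\mathcal L_\xi\nabla)(e,e)-(\mathcal L_\xi\nabla)(\varphi e,\varphi e)$ to obtain $-2\bigl(\xi(b_1)+2b_1^2\bigr)\xi$. The only point to make explicit is that the horizontal components of $(\mathcal L_\xi\nabla)(e,e)$ and $(\mathcal L_\xi\nabla)(\varphi e,\varphi e)$ become identical (both equal to $e(b_1)e+\varphi e(b_1)\varphi e$) only after substituting the Jacobi identities \eqref{3nor}, which is exactly what the paper signals by \lq\lq also taking into account \eqref{3}\rq\rq.
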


We now consider the case when $\xi$ is an affine Killing vector field. In such a case, from \eqref{nablapc} (with $a_1=a_2=0$) we find $(\mathcal L_{\xi} \nabla)(\xi,e)=\xi(b_1)e+\xi(b_2)\varphi e=0$ and $(\mathcal L_{\xi} \nabla)(e,e)=0$. Thus,  we get 
\begin{eqnarray*}
\left\{ \begin{array}{l}
\xi(b_1)=\xi(b_2)=0, \\[2pt]	
\xi(b_1)+2b_1^2=0,
\end{array}
\right.
\end{eqnarray*}
%
which easily yield $b_1=0$. So, by Proposition~\ref{p5}, $\xi$ is Killing and we proved  the following rigidity result.

\begin{theorem}
If the Reeb vector field $\xi$ of a three-dimensional normal almost paracontact metric manifold $(M,\varphi,\xi,\eta,g)$ is affine Killing, then $\xi$ is Killing. In particular, in such a case,
\begin{enumerate}
\item if $b_2=\frac{1}{2} {\rm tr}(\varphi\nabla \xi) \neq 0$, then $(M,\varphi,\xi,\eta,g)$ is quasi-para-Sasakian;
\vspace{2pt}\item if $b_2=\frac{1}{2} {\rm tr}(\varphi\nabla \xi)=0$, then $(M,\varphi,\xi,\eta,g)$ is para-cosymplectic.
\end{enumerate}
\end{theorem}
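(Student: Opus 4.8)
The plan is to organize the rigidity assertion around two independent relations obtained from the affine Killing hypothesis $\mathcal{L}_\xi\nabla=0$. First, since $\mathcal{L}_\xi\nabla=0$ forces ${\rm tr}(\mathcal{L}_\xi\nabla)=0$, the Reeb field $\xi$ is in particular an infinitesimal harmonic transformation, so condition \eqref{IHTnor}, namely $\xi(b_1)+2b_1^2=0$, holds. To extract a second relation I would evaluate the tensor $\mathcal{L}_\xi\nabla$ on the pair $(\xi,e)$, using $(\mathcal{L}_\xi\nabla)(X,Y)=[\xi,\nabla_X Y]-\nabla_{[\xi,X]}Y-\nabla_X[\xi,Y]$ together with the brackets \eqref{2nor} and the Levi-Civita connection \eqref{nabla} specialized to $a_1=a_2=0$. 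A direct computation gives $(\mathcal{L}_\xi\nabla)(\xi,e)=\xi(b_1)e+\xi(b_2)\varphi e$, so that the affine Killing condition forces $\xi(b_1)=\xi(b_2)=0$.

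Combining $\xi(b_1)=0$ with \eqref{IHTnor} immediately yields $2b_1^2=0$, hence $b_1=0$. Since the structure is normal we have $h=0$, and therefore Proposition~\ref{p5}(4), which asserts that $\xi$ is Killing if and only if $h=0$ and $b_1=0$, shows that $\xi$ is Killing. This settles the rigidity part of the statement.

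For the classification into cases (1) and (2), the plan is to read off the functions $\bar\alpha,\bar\beta$ appearing in the normality characterization \eqref{trans} in terms of $b_1,b_2$. I would compute $(\nabla_X\varphi)Y$ on the $\varphi$-basis directly from \eqref{nabla} with $a_1=a_2=0$ (and now $b_1=0$); for instance $(\nabla_e\varphi)e=b_2\xi$ and $(\nabla_{\varphi e}\varphi)e=b_1\xi$, which, compared with the right-hand side of \eqref{trans} and using $g(e,e)=1$, $g(\varphi e,\varphi e)=-1$, $g(e,\varphi e)=0$ and $\varphi^2 e=e$, identify $\bar\beta=b_2$ and $\bar\alpha=b_1$. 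Recalling from \eqref{trphi} that $b_2=\tfrac12{\rm tr}(\varphi\nabla\xi)$ since $a_1=0$, and that here $\bar\alpha=b_1=0$, the $\alpha$-para-Kenmotsu possibility (b) of Section~2 is excluded, and the definitions (a)--(c) recalled there give exactly: quasi-para-Sasakian (case (a), $\bar\alpha=0\neq\bar\beta$) when $b_2\neq0$, and para-cosymplectic (case (c), $\bar\alpha=\bar\beta=0$) when $b_2=0$.

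The only genuinely delicate step is the correct identification of $\bar\alpha$ and $\bar\beta$: one must evaluate \eqref{trans} on enough pairs of basis vectors to determine both functions unambiguously and to confirm consistency (for instance also on $(\varphi e,\varphi e)$ and on pairs involving $\xi$), being careful with the signs arising from the timelike character of $\varphi e$ and from the sign convention in the compatibility relation \eqref{compg}. Everything else reduces to routine bracket-and-connection calculations already set up in Section~3.
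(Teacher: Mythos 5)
Your proof is correct and follows essentially the same route as the paper: the key step in both is computing $(\mathcal L_{\xi}\nabla)(\xi,e)=\xi(b_1)e+\xi(b_2)\varphi e=0$, combining $\xi(b_1)=0$ with $\xi(b_1)+2b_1^2=0$ (which the paper extracts from $(\mathcal L_{\xi}\nabla)(e,e)=0$ and you from the trace condition \eqref{IHTnor} --- the same computation) to get $b_1=0$, and then invoking Proposition~\ref{p5}(4) together with $h=0$ from normality. Your explicit identification $\bar\alpha=b_1$, $\bar\beta=b_2$ via evaluating \eqref{trans} on basis pairs is a correct filling-in of the detail behind parts (1)--(2), which the paper states without proof.
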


\noindent
Next, with regard to the curvature and the Ricci tensor, using \eqref{nabla} with $a_1=a_2=0$, by standard calculations we find 
\begin{align*}
&R(\xi,e)\xi= -\big( \xi(b_1)+b_1^2 +b_2 ^2 \big)e , \\[2pt]	
&R(\xi, \varphi e)\xi= -\big( \xi(b_1)+b_1^2 +b_2 ^2 \big)\varphi e , \\[2pt]
&R(e, \varphi e)\xi= \big(\varphi e (b_1)-e(b_2) \big)e +\big(\varphi e (b_2)-e(b_1)\big) \varphi e, \\[2pt]
&R(e, \varphi e)e= -\big(\varphi e (b_1)-e(b_2) \big)\xi+ \big(\varphi e (a_4)-e(a_5)+a_4^2-a_5^2-b_1^2+b_2^2+2b_2a_3\big) \varphi e 	
\end{align*}
and so, with respect to $\{\xi, e, \varphi e \}$, the Ricci tensor $\varrho$ is completely described by
\begin{equation}\label{rhonor}
\varrho =\left(\begin{array}{ccc}
-2 \tilde{A}& \varphi e (b_2)-e(b_1) & e (b_2)-\varphi e(b_1) \\[4pt]
\varphi e (b_2)-e(b_1) & \tilde B-\tilde A & 0 \\[4pt]
 e (b_2)-\varphi e(b_1) & 0 & -\tilde B+\tilde  A
\end{array}\right),  
\end{equation}
with
\begin{equation}\label{ABnor}
\tilde A:=\xi(b_1)+b_1^2 +b_2 ^2, \qquad
\tilde B:= \varphi e (a_4)-e(a_5)+a_4^2-a_5^2-b_1^2+b_2^2+2b_2a_3. 
\end{equation}

It is worthwhile to remark that, by \eqref{rhonor}, contrarily to the paracontact metric case, even when the Reeb vector field $\xi$ of a normal almost paracontact metric structure is an infinitesimal harmonic transformation, $\xi$ needs not to be a Ricci eigenvector. 

From the above formulas we have that $R(\xi,e,\xi,e)=- \tilde A$. On the other hand, taking $X=Z=\xi$ and $Y=V=e$ in \eqref{curv3d}, we get $R(\xi,e,\xi,e)=\varrho(e,e)-2 \tilde A- \frac{r}{2}$ and so, $\varrho(e,e)=\tilde A + \frac{r}{2}$. Comparing this equation with \eqref{rhonor}, we then find $\tilde B=2 \tilde A+  \frac{r}{2}$. Consequently, \eqref{rhonor} becomes
\begin{equation}\label{rhonor*}
\varrho =\left(\begin{array}{ccc}
-2 \tilde{A}& \varphi e (b_2)-e(b_1) & e (b_2)-\varphi e(b_1) \\[4pt]
\varphi e (b_2)-e(b_1) & \tilde A+  \frac{r}{2} & 0 \\[4pt]
 e (b_2)-\varphi e(b_1) & 0 & -\tilde A-  \frac{r}{2}
\end{array}\right),  
\end{equation}
We now use \eqref{nablapc} with $a_1=a_2=0$ to describe the Lie derivative of $g$ with respect to $\{\xi, e, \varphi e \}$ and we find 
$$
\mathcal L _{\xi} g =\left(\begin{array}{ccc}
0 & 0 & 0 \\[2pt]
0 & 2 b_1 & 0 \\[2pt]
0 & 0 & -2 b_1
\end{array}\right).
$$
Henceforth, $\xi$ satisfies equation \eqref{solit} if and only if the following system of differential equations holds:
\begin{equation} \label{RSnor1}
\left\{
\begin{array}{l}
 \lambda+2 \tilde A=0, \\[2pt]
2b_1+\tilde A +\frac{r}{2}- \lambda=0, \\[2pt]
\varphi e (b_2)-e(b_1)=0 ,\\[2pt]
e (b_2)-\varphi e(b_1)=0. 
\end{array}
\right.
\end{equation}
%
%
%
By  \eqref{IHTnor}, the first equation in \eqref{RSnor1} and the definition of $\tilde A$, we have
$$-2b_1^2= \xi(b_1)=-\frac{\lambda}{2}-b_1^2-b_2^2 $$
and so, 
\begin{equation}\label{last*}
2(b_1^2-b_2^2)=\lambda .
\end{equation}
Deriving \eqref{last*} with respect to $\xi$ and substituting $\xi(b_2)$ from the first equation in \eqref{3nor},
we then get $2b_1 (b_2^2-b_1^2)=0$, that is, again by \eqref{last*},
$$\lambda b_1=0.$$
If $\lambda\neq 0$, then the equation above implies that $b_1=0$. Thus, $\mathcal L _\xi g=0$, that is, $\xi$ is Killing and so, the Ricci soliton is trivial. Moreover, by  \eqref{rhonor*}, when $b_1=0$ we easily conclude that the three-dimensional normal almost paracontact metric manifold is Einstein and  so, of constant curvature $k=-b_2^2 \leq 0$. 

\noindent
On the other hand, if $\lambda=0$, then \eqref{RSnor1} and \eqref{last*} yield
\begin{equation}\label{RSnor0}
\left\{
\begin{array}{l}
b_2=\varepsilon b_1 =\pm b_1, \\[2pt]
\xi(b_1)=-2b_1^2, \\[2pt]
r=-4b_1, \\[2pt]
(e-\varepsilon \varphi e) (b_1)= 0. 
\end{array}
\right.
\end{equation}
Summarizing, we proved the following result.
\begin{theorem}\label{thRSnor*}
Let $(M,\varphi,\xi,\eta,g)$ a normal almost paracontact metric three-manifold, as locally described by equations \eqref{2nor}-\eqref{IHTnor}. 

\smallskip\noindent
1) If $(M,\varphi,\xi,\eta,g)$ is an unsteady almost paracontact Ricci soliton, then $\xi$ is Killing and $(M,g)$ has constant sectional curvature $k=-b_2^2 < 0$. 

\smallskip\noindent
2) If $(M,\varphi,\xi,\eta,g)$ is a steady almost paracontact Ricci soliton, then  \eqref{RSnor0} holds.
\end{theorem}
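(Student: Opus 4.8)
The plan is to carry out the computation in two cleanly separated cases according to whether $\lambda=0$ or $\lambda\neq 0$, exactly as the preceding display equations already anticipate. Since the manifold is assumed to be locally described by \eqref{2nor}--\eqref{IHTnor}, I already have the Levi-Civita connection from \eqref{nabla} with $a_1=a_2=0$, the Ricci tensor in the reduced form \eqref{rhonor*}, the explicit expression of $\mathcal L_\xi g$, and hence the soliton system \eqref{RSnor1} together with its consequences \eqref{last*} and the relation $\lambda b_1=0$. The whole argument therefore reduces to reading off what these already-derived equations force in each case, so no genuinely new curvature computation is needed.

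First I would treat the unsteady case $\lambda\neq 0$. From $\lambda b_1=0$ I conclude immediately that $b_1=0$, whence $\mathcal L_\xi g=0$ and $\xi$ is Killing, so the soliton is trivial. To identify the geometry, I substitute $b_1=0$ into \eqref{rhonor*}: the off-diagonal Ricci entries involve $e(b_1)$, $\varphi e(b_1)$, $e(b_2)$, $\varphi e(b_2)$, and the third and fourth equations of \eqref{RSnor1} together with $b_1=0$ must be used to kill the remaining derivative terms, leaving $\varrho$ proportional to $g$. This shows $(M,g)$ is Einstein, and since an Einstein three-manifold has constant sectional curvature, I read off $k=-b_2^2$ from \eqref{last*} (with $\lambda=2(b_1^2-b_2^2)=-2b_2^2$, giving $k=\lambda/2=-b_2^2$). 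The inequality $k<0$ (rather than merely $k\le 0$) follows because $\lambda\neq 0$ forces $b_2\neq 0$, which establishes part~1.

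For the steady case $\lambda=0$, the work is purely bookkeeping: I collect $b_2=\varepsilon b_1$ from \eqref{last*} (since $b_1^2=b_2^2$), record $\xi(b_1)=-2b_1^2$ from \eqref{IHTnor}, obtain $r=-4b_1$ by combining the second equation of \eqref{RSnor1} with $\tilde A=\xi(b_1)+b_1^2+b_2^2=0$ (using $\xi(b_1)=-2b_1^2$ and $b_1^2=b_2^2$, so $\lambda=0$ and the scalar-curvature equation collapses to $r=-4b_1$), and finally derive the last equation $(e-\varepsilon\varphi e)(b_1)=0$ by feeding $b_2=\varepsilon b_1$ into the third and fourth equations of \eqref{RSnor1}. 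Assembling these four relations yields \eqref{RSnor0}, which is precisely part~2.

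The only step requiring care is confirming, in the unsteady case, that setting $b_1=0$ genuinely forces the off-diagonal Ricci entries in \eqref{rhonor*} to vanish rather than leaving a nonzero skew part; this is where the third and fourth equations of the soliton system \eqref{RSnor1} must be invoked, and I expect the verification that the resulting diagonal Ricci tensor is a constant multiple of $g$ (so that $\tilde A+\tfrac{r}{2}=-2\tilde A$, forcing $r=-6\tilde A$ and hence constancy) to be the one place where a short explicit check is unavoidable. Everything else follows by direct substitution into relations already established above.
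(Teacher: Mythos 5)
Your proposal is correct and follows essentially the same route as the paper: the same case split on $\lambda$ driven by the relation $\lambda b_1=0$, with the unsteady case giving $b_1=0$, hence $\xi$ Killing and (via the soliton equation and \eqref{rhonor*}) an Einstein, constant-curvature metric with $k=\lambda/2=-b_2^2<0$, and the steady case reducing by direct substitution to \eqref{RSnor0}. Your explicit verification of the strict inequality $k<0$ and of the vanishing of the off-diagonal Ricci entries merely spells out what the paper leaves as \lq\lq easily concluded\rq\rq, so there is no substantive difference.
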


\noindent
By the above Theorem~\ref{thRSnor*}, a normal almost paracontact unsteady Ricci soliton is necessarily trivial, and of negative constant sectional curvature.  Moreover, in the case $2)$ of the above Theorem, if $\xi(r)=0$ then by \eqref{RSnor0} we get $-2b_1^2=\xi(b_1)=0$. So, $b_2=b_1=0$, that is, $(M,g)$ is flat. Therefore, we have the following result.

\begin{corollary}
A normal almost paracontact metric three-manifold $(M,\varphi,\xi,\eta,g)$  with $\xi(r)=0$ (in particular, having constant scalar curvature, or being homogeneous) is a Ricci soliton if and only if $(M,g)$ is of nonpositive constant sectional curvature.
\end{corollary}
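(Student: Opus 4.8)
The plan is to prove the two implications separately, with the forward direction resting entirely on the case analysis already packaged in Theorem~\ref{thRSnor*}, supplemented by the scalar-curvature hypothesis.

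For the forward implication, I would assume that $(M,\varphi,\xi,\eta,g)$ is a normal almost paracontact Ricci soliton with $\xi(r)=0$, and distinguish the unsteady case $\lambda\neq0$ from the steady case $\lambda=0$. In the unsteady case, part~1) of Theorem~\ref{thRSnor*} directly gives constant sectional curvature $k=-b_2^2<0$, so nothing further is needed. In the steady case, part~2) gives \eqref{RSnor0}; in particular $r=-4b_1$. Differentiating along $\xi$ and using $\xi(r)=0$ yields $\xi(b_1)=0$, which, compared with $\xi(b_1)=-2b_1^2$ from \eqref{RSnor0}, forces $b_1\equiv0$ and hence $b_2=\varepsilon b_1\equiv0$.

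It then remains to extract flatness from $b_1=b_2=0$. I would read the Ricci tensor off \eqref{rhonor*}: since $\tilde A=\xi(b_1)+b_1^2+b_2^2=0$ and the off-diagonal entries are derivatives of the identically vanishing functions $b_1,b_2$, while $r=-4b_1=0$, we obtain $\varrho\equiv0$. In dimension three, formula \eqref{curv3d} expresses the whole curvature tensor through $\varrho$ and $r$, so $\varrho\equiv0$ forces $R\equiv0$; thus $(M,g)$ is flat, that is, of constant sectional curvature $k=0$. Combining the two cases, a Ricci soliton with $\xi(r)=0$ always has nonpositive constant sectional curvature.

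For the converse I would argue that a three-manifold of nonpositive constant sectional curvature is Einstein, and Einstein metrics satisfy \eqref{solit} (taking $X$ Killing, for instance $X=0$), hence are trivial Ricci solitons, as recalled in the Introduction; moreover constant sectional curvature forces $\xi(r)=0$, so the standing hypothesis holds automatically. The ``in particular'' clause needs no separate treatment, since constant scalar curvature---and \emph{a fortiori} homogeneity---clearly implies $\xi(r)=0$. I do not anticipate a genuine obstacle here: the substance is carried by Theorem~\ref{thRSnor*}, and the only point requiring care is verifying, in the steady case, that the collapse $b_1=b_2=0$ really propagates through \eqref{rhonor*} and \eqref{curv3d} to full flatness, rather than to the merely weaker conclusion $Q\xi=0$.
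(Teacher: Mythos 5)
Your proposal is correct and follows essentially the same route as the paper: the forward direction rests on the case analysis of Theorem~\ref{thRSnor*} (unsteady case settled by part~1; steady case by combining $r=-4b_1$ and $\xi(b_1)=-2b_1^2$ from \eqref{RSnor0} with $\xi(r)=0$ to force $b_1=b_2=0$, hence flatness), and the converse on the fact that constant-curvature three-manifolds are Einstein and therefore trivial Ricci solitons. The only difference is that you spell out the flatness verification through \eqref{rhonor*} and \eqref{curv3d}, a step the paper asserts without detail; this is a harmless and correct elaboration.
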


The above Theorem~\ref{thRSnor*} clarifies the structure of three-dimensional normal almost paracontact Ricci solitons, and permits us to correct Theorem~3.1 (and subsequent Proposition~3.6) in \cite{BeCr}. In fact, Theorem~3.1 of 
\cite{BeCr} states that if $(M,\varphi,\xi,\eta,g)$ is an almost paracontact Ricci soliton, then $\xi$ is a conformal Killing vector field, that is, one would have
$$\mathcal L _\xi g = f \cdot g,$$
for some smooth function $f$. However, whether the manifold is a Ricci soliton or not, since $g(\xi,\xi)=1$, from the above equation one would have at once
$$f= f \cdot g(\xi,\xi)= (\mathcal L _\xi g)(\xi,\xi)= \xi (g(\xi,\xi))-2g([\xi,\xi],\xi) =0,$$
so that $\xi$ would be Killing. In particular, if $(M,\varphi,\xi,\eta,g)$ were a Ricci soliton, then it would be an Einstein manifold (and so, being three-dimensional, of constant sectional curvature). The same conclusion also follows by the fact that substituting $\mathcal L _\xi g = f \cdot g$ into the Ricci soliton equation \eqref{solit}, one would get that the Ricci tensor $\varrho$ satisfies $\varrho =(\lambda -f) \cdot g$, and it is well known that this yields that $f$ is necessarily a constant and so, the manifold is Einstein. 

Thus, we can correct Theorem~3.1 in \cite{BeCr} by the following rigidity result.

\begin{theorem}\label{thRSBeCr}
Let $(M,\varphi,\xi,\eta,g)$ be a normal almost paracontact metric three-manifold. If $\xi$ is a conformal Killing vector field, then $\xi$ is Killing. In particular, if $(M,\varphi,\xi,\eta,g)$ is also an almost paracontact  Ricci soliton, then it is trivial, that is, $(M,g)$ has (nonpositive) constant sectional curvature.
\end{theorem}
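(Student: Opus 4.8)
The plan is to prove the two assertions in sequence, exactly as the statement splits them. The first part—that a conformal Killing Reeb field must in fact be Killing—is a direct computation that does not even require normality. Suppose $\mathcal{L}_\xi g = f\cdot g$ for some smooth function $f$. The key observation is that $\xi$ is a unit vector field with respect to $g$, since $g(\xi,\xi)=\eta(\xi)=1$ by \eqref{almostpc} and the remark following \eqref{compg}. Evaluating the conformal Killing equation on the pair $(\xi,\xi)$ and using $\mathcal{L}_\xi g(\xi,\xi)=\xi(g(\xi,\xi))-2g(\mathcal{L}_\xi\xi,\xi)=\xi(1)-2g([\xi,\xi],\xi)=0$, one reads off immediately that $f=f\cdot g(\xi,\xi)=0$. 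Hence $\mathcal{L}_\xi g=0$, i.e.\ $\xi$ is Killing. This is the entire content of the first sentence, and it is essentially forced; there is no genuine obstacle here.

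For the second part I would then invoke Theorem~\ref{thRSnor*}. Once we know $\xi$ is Killing, Proposition~\ref{p5}(4) (together with the normality condition $h=0$, equivalently $a_1=a_2=0$, established in Section~2) gives $b_1=0$. I would then feed $b_1=0$ into the Ricci soliton analysis already carried out: by the formula for $\mathcal{L}_\xi g$ in the normal case, $b_1=0$ forces $\mathcal{L}_\xi g=0$, so equation \eqref{solit} reduces to $\varrho=\lambda g$, that is, $(M,g)$ is Einstein. Since $\dim M=3$, an Einstein manifold has constant sectional curvature. Finally, the refined description in \eqref{rhonor*} with $b_1=0$ shows the constant curvature is $k=-b_2^2\le 0$, giving the stated nonpositivity. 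This reproduces precisely the conclusion reached along the way to Theorem~\ref{thRSnor*}, so I would simply cite that theorem rather than redo the curvature computation.

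The only subtlety worth flagging is logical rather than computational: the theorem as stated hypothesizes that $\xi$ is conformal Killing and, separately, that the manifold is a Ricci soliton. The first part shows the conformal Killing hypothesis collapses to Killing unconditionally, so the second part is really a corollary of the general soliton classification under the added (now vacuous) conformal assumption. The main thing to be careful about is making explicit that the argument corrects \cite{BeCr}: the point is that conformal Killing is \emph{not} an automatic consequence of the soliton equation (contrary to Theorem~3.1 there), but \emph{if} it is imposed, it degenerates to Killing and the soliton becomes trivial. I expect no real obstacle; the proof is short, and the heavy lifting has already been done in Theorem~\ref{thRSnor*}.
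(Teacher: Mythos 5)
Your proposal is correct and takes essentially the same route as the paper: the paper likewise evaluates $\mathcal{L}_\xi g = f\cdot g$ on $(\xi,\xi)$, uses $g(\xi,\xi)=1$ to get $f=(\mathcal{L}_\xi g)(\xi,\xi)=0$, hence $\xi$ Killing, and then concludes that the soliton equation \eqref{solit} reduces to $\varrho=\lambda g$, so the three-manifold is Einstein and of constant sectional curvature $k=-b_2^2\le 0$ by the frame computations behind Theorem~\ref{thRSnor*}. The only difference is cosmetic: the paper also sketches an alternative closing argument (substituting $\mathcal{L}_\xi g=f\cdot g$ into \eqref{solit} to get $\varrho=(\lambda-f)g$, forcing $f$ constant), which your version does not need.
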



\end{document}